\providecommand{\CC}{{\mathbb{C}}}
\providecommand{\RR}{{\mathbb{R}}}
\providecommand{\mcD}{{\mathcal{D}}}
\providecommand{\mcE}{{\mathcal{E}}}
\providecommand{\mcF}{{\mathcal{F}}}
\providecommand{\mcO}{{\mathcal{O}}}
\providecommand{\mcS}{{\mathcal{S}}}
\providecommand{\msS}{{\mathscr{S}}}
\providecommand{\Lg}{{\mathfrak g}}
\providecommand{\Lg}{{\mathfrak g}}
\providecommand{\Symb}{{\mathscr{S}_H}}
\newcommand{\ft}{{s}}
\DeclareMathOperator{\Res}{Res}
\DeclareMathOperator{\supp}{supp}
\DeclareMathOperator{\sign}{sign}
\newtheorem{theorem}{Theorem}[section]
\newtheorem{lemma}[theorem]{Lemma}
\newtheorem{proposition}[theorem]{Proposition}
\theoremstyle{definition}
\newtheorem{definition}[theorem]{Definition}
\theoremstyle{remark}
\newtheorem{remark}[theorem]{Remark}
\newtheorem{example}[theorem]{Example}
\numberwithin{equation}{section}
\title[Traces for the Heisenberg Calculus]{A note on traces for the Heisenberg calculus}
\author{Alexander Gorokhovsky}
\address{University of Colorado, Boulder, Campus Box 395, Boulder, Colorado, 80309, USA}
\email{gorokhov@colorado.edu}
\author{Erik van Erp}
\address{Dartmouth College, 6188, Kemeny Hall, Hanover, New Hampshire, 03755, USA}
\email{jhamvanerp@gmail.com}
\begin{document}

\begin{abstract}
In \cite{GvE22}, we gave a local formula for the index of Heisenberg elliptic operators on contact manifolds. 
We constructed a cocycle in  periodic cyclic cohomology which, when paired with the Connes-Chern character of the principal Heisenberg symbol, calculates the index.
A crucial ingredient of our index formula was a new trace  on the   algebra of  Heisenberg pseudodifferential operators.
The construction of this trace was rather involved.
In the present paper we clarify the nature of this trace, and thus simplify the index formula of \cite{GvE22}.
\end{abstract}
\maketitle
\setcounter{tocdepth}{1}
\tableofcontents

\section{Introduction}

Let $\Lg=\RR^3$ be the Lie algebra of the Heisenberg group with basis $X, Y, Z$ and
\[ [X,Z]=[Y,Z]=0\qquad [X,Y]=Z\] 
Elements of $\Lg$ can de identified with right-invariant vector fields on the Heisenberg group $G=\RR^3$,
\[ X = \frac{\partial}{\partial x}-\frac{1}{2}y\frac{\partial}{\partial z}\qquad Y = \frac{\partial}{\partial y}+\frac{1}{2}x\frac{\partial}{\partial z}\qquad Z = \frac{\partial}{\partial z}\]
Here $(x, y, z)$ are the standard coordinates on $\RR^3$.
The Heisenberg calculus is a pseudodifferential calculus  that is built on the  idea that, for certain purposes, it is natural to treat $Z$ as a differential operator of degree 2.
For example, in the classical calculus, the leading term of the second order differential operator
\[ P = X^2+Y^2 + i\gamma Z\qquad  \gamma\in \CC\]
is the sublaplacian $X^2+Y^2$.
The principal symbol of $P$ is not invertible, and $P$ is not elliptic.
But in the Heisenberg calculus, the term $i\gamma Z$ has order 2 and is part of the leading term. 
It turns out that the  principal symbol of $P$ is invertible in the Heisenberg calculus iff  $\gamma$ is not an odd integer. 
In that case,   $P$ has a parametrix (an inverse modulo smoothing operators), and, while $P$ is not elliptic, it is still hypoelliptic.

The construction of the Heisenberg pseudodifferential calculus generalizes naturally to contact manifolds (see \cite{BG88}), which can be locally identified with the Heisenberg group.
On a compact contact manifold, an operator with invertible principal symbol in the Heisenberg calculus is a Fredholm operator.
In \cite{GvE22}, we gave a local formula for the index of Heisenberg elliptic operators on contact manifolds in terms of the principal symbol. 
Principal symbols of pseudodifferential operators in the Heisenberg calculus form a noncommutative algebra $\mathscr{S}_H$.
An invertible   symbol  determines an element in the  $K$-theory group  $K_1(\mathscr{S}_H)$.
The Connes-Chern character maps $K_1(\mathscr{S}_H)$ to  periodic cyclic homology $HP_1(\mathscr{S}_H)$.
In \cite{GvE22}, we constructed a cocycle in  periodic cyclic cohomology $HP^1(\mathscr{S}_H)$ which, when paired with the Connes-Chern character of the symbol,   calculates the index.

A crucial ingredient of our index formula was a  trace $\tau$ on the symbol algebra $\mathscr{S}_H$. The purpose of the present paper is to clarify the nature of this trace, and thus to simplify the index formula of \cite{GvE22}.
The construction of $\tau$ in \cite{GvE22} was rather involved.
We show here how $\tau$ can be understood with reference to a natural class of traces that can be defined  for a general homogeneous group.

A homogeneous group $G$ is a nilpotent Lie group with underlying manifold $\RR^m$ that has a graded Lie algebra.
Let $\Psi^0_H(G)$ be the algebra of translation invariant order zero operators in the Heisenberg calculus associated to  $G$ (as, for example, in \cite{CGGP92}).
An operator in $\Psi^0_H(G)$ is given by convolution (on the left) with a compactly supported  distribution $k\in \mcE'(G)$ that is regular (i.e., $k$ restricts to a smooth function on $G\minus \{0\}$) and that has an asymptotic expansion
\[ k\sim k_0+k_1+k_2+\cdots\]
The principal term $k_0$ is homogeneous of degree $-Q$, where $Q$ is the homogeneous dimension of $G$. 
In Section \ref{sec:traces_homogeneous} we show that evaluation of the principal term $k_0$ at a central element $z\in G$, $z\ne 0$   defines a trace,
\[ \tau_z:\Psi_H^0(G)\to \CC\qquad \tau_z(k):=k_0(z)\]
In Section \ref{sec:main} we briefly review our construction of the trace $\tau$ from \cite{GvE22}, which is defined if $G$ is the $2n+1$ dimensional Heisenberg group. 
The main result of this paper is Theorem \ref{thm:main}, which expresses $\tau$ as a linear combination of traces defined in Section \ref{sec:traces_homogeneous},
\begin{equation}\label{tau_new}
    \tau = \frac{(2\pi)^{n+1}}{2(n!) i^{n+1}}\left(\tau_{(0,+1)}-(-1)^n\tau_{(0,-1)}\right)
\end{equation} 
Section \ref{sec:fourier} introduces a technical lemma about the Fourier transform of a homogeneous distribution. 
We use this lemma in Section \ref{sec:proofs} to prove Theorem \ref{thm:main}. 

\begin{remark}
In \cite{GvE22}  we worked with traces on the Weyl algebra, rather than on the convolution algebra of the Heisenberg group.
The connection between the two involves a Fourier transform.
The constant in equation (\ref{tau_new}) depends on a choice of isomorphism between the convolution algebra and the algebra used in \cite{GvE22}.
The relevant conventions are spelled out in section \ref{conventions}.
\end{remark}

\subsection*{Acknowledgment}
We thank the anonymous referee for carefully reading the paper, and making useful suggestions and corrections. The first author is grateful for the hospitality and support of the Max-Planck Institute for Mathematics in Bonn where part of this work was carried out.

\section{Traces for homogeneous groups}\label{sec:traces_homogeneous}

\subsection{Homogeneous groups}
Let $G$ be a nilpotent Lie group with underlying manifold $\RR^n$, with graded Lie algebra $\Lg$. This means that $\Lg$ decomposes as a direct sum,
\[ \Lg=\Lg^{1} \oplus \cdots \oplus \Lg^{r}\]
where $[\Lg^j,\Lg^k]\subseteq  \Lg^{j+k}$
if $j+k\le r$, and $[\Lg^j,\Lg^k]$ = 0 if $j+k>r$.
Dilations $\delta_t:\Lg\to \Lg$  are defined by
\[ \delta_t(X) = t^{j}X\qquad \text{if}\;X\in \Lg^{j},\; t>0\]
The dilation $\delta_t$ is a Lie algebra automorphism.
We identify $G=\Lg$ via the exponential map.
Then $\delta_t$ is  a group automorphism.

If $\mu$ is a Haar measure on $G$,
then $\mu \circ \delta_t=t^Q\mu$, where  $Q$ is the homogeneous dimension of $G$,
\[ Q = \sum_{j=1}^r \;j \cdot\mathrm{dim}\Lg^{j}\]
We shall use the term ``homogeneous''  always in relation to the dilations $\delta_t$.
A function $f$ on $G$ is homogeneous of degree $k$ if $f(\delta_tx) = t^kf(x)$. A distribution $u\in \mcD'(G)$ is homogeneous of degree $k$ if
$\langle u, \varphi\circ\delta_t\rangle = t^{-Q-k} \langle u, \varphi\rangle$.

\begin{example}\label{ex:HGrp}
The Lie algebra of the Heisenberg group $G=\RR^{2n+1}$ is graded,
\[ \Lg=\Lg^1\oplus \Lg^2 \qquad \Lg^1=\RR^{2n},\; \Lg^2=\RR\]
The Lie bracket is
\[ [(x,t),(y,s)]=(0,\omega(x,y)) \qquad x,y\in \RR^{2n},\; t,s\in \RR\]
where $\omega(x,y) = \sum_{i=1}^n x_iy_{n+i}-x_{n+i}y_i$ is the standard symplectic form on $\RR^{2n}$.
The group operation is
\[ (x,t)(y,s)=(x+y, t+s+\frac{1}{2}\omega(x,y))\qquad x,y\in \RR^{2n},\; t,s\in \RR\]
The dilations are $\delta_\lambda(x,t) = (\lambda x, \lambda^2 t)$.
The homogeneous dimension is $Q=2n+2$.

\end{example}

\subsection{The algebra}
For a homogeneous group $G$ as above, let $\Psi^0_H(G)$ denote the algebra of right-invariant  pseudodifferential operators of order zero in the generalized Heisenberg calculus on $G$ (as defined, for example, in \citelist{\cite{CGGP92} \cite{Me83}}).
An operator in $\Psi^0_H(G)$ is given by convolution (on the left) with a compactly supported  distribution $k\in \mcE'(G)$ that is regular (i.e., smooth on $G\minus \{0\}$) and that has an asymptotic expansion
\[ k\sim k_0+k_1+k_2+\cdots\]
where the principal term $k_0$ is  homogeneous of degree $-Q$. 

The algebra $\msS_H$ of principal symbols is the quotient in the short exact sequence
\[ 0\to \Psi^{-1}_H(G) \to \Psi^0_H(G) \to \msS_H(G) \to 0\]
Elements of $\msS_H(G)$ can be represented by regular distributions on $G$ that are homogeneous of degree $-Q$.
Note that the Dirac delta distribution on $G$ is homogeneous of degree $-Q$.
The Dirac delta is the unit of the algebra $\msS_H(G)$.

Convolution of two homogeneous  distributions $f,g\in \msS_H(G)$ is defined as follows. 
Let $\phi \in C_c^\infty\left(G\right)$ be such that $\phi =1 $ in a neighborhood of $0$. 
Then $f * g$ is, by definition, the principal part (of homogeneous degree $-Q$) in the asymptotic expansion of $\phi f * \phi g$. The result does not depend on the choice of $\phi$.

\subsection{Traces}\label{sec:traces}
For a central element $z\in G$,
evaluation at $z$ defines a trace on the convolution algebra $C_c^\infty(G)$,
\begin{equation*}\label{eqn:conv}
(f\ast g)(z) = \int_G f(zx^{-1})g(x)dx = \int_G g(x)f(x^{-1}z)dx = (g\ast f)(z)
\end{equation*}
This generalizes in various ways.

\begin{lemma}\label{distr}
If $z$ is a central element of $G$, and $u\in \mathcal{E}'\left(G\right)$, $f \in C^{\infty}\left(G\right)$, then
\begin{equation*}
\left(u*f\right)\left(z\right)=\left(f*u\right)\left(z\right)
\end{equation*}    
\end{lemma}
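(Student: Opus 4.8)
The plan is to reduce the identity to the elementary observation that centrality of $z$ makes the two relevant test functions literally equal, so that the whole statement becomes a matter of unwinding the definition of convolution rather than of any estimate. First I would recall the standard formulas for convolving a compactly supported distribution $u\in\mathcal{E}'(G)$ with a smooth function $f\in C^\infty(G)$. Since $G$ is a unimodular (indeed nilpotent) Lie group, both $u*f$ and $f*u$ are again smooth functions, so pointwise evaluation is legitimate, and one has
\[ (u*f)(z)=\langle u(x),\,f(x^{-1}z)\rangle, \qquad (f*u)(z)=\langle u(x),\,f(zx^{-1})\rangle, \]
the brackets denoting the pairing of $u$ against the indicated smooth function of $x$. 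These are obtained from the defining relation $\langle u*v,\varphi\rangle=\langle u(x)\otimes v(y),\varphi(xy)\rangle$ by specializing $v$ to the smooth density $f\,dx$ and using invariance of Haar measure; they are exactly the distributional counterpart of the function-level identities displayed just before the statement.

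The key step is then immediate: because $z$ is central, $zx^{-1}=x^{-1}z$ for every $x\in G$, so the two functions $x\mapsto f(x^{-1}z)$ and $x\mapsto f(zx^{-1})$ are identical. Pairing the single distribution $u$ against this common function yields $(u*f)(z)=(f*u)(z)$, which is the claim.

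Since there is no genuine analytic difficulty here, the only point requiring care — and thus the main (minor) obstacle — is bookkeeping: fixing the convolution convention consistently with the ``convolution on the left'' used to define the algebra $\Psi^0_H(G)$, and confirming that the pointwise evaluation formulas above are valid for a compactly supported distribution paired against a smooth, not necessarily compactly supported, function. As an alternative that sidesteps writing the distributional pairing explicitly, I would mollify $u$ into a net $u_\varepsilon$ of smooth functions, apply the already-established commutation identity $(u_\varepsilon*f)(z)=(f*u_\varepsilon)(z)$ for smooth functions at the central point $z$, and pass to the limit using continuity of the convolution map $\mathcal{E}'(G)\times C^\infty(G)\to C^\infty(G)$; however, I expect the direct pairing argument to be the cleaner of the two.
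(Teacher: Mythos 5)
Your proof is correct and follows essentially the same route as the paper: both write $(u*f)(z)=\langle u,\,f(\cdot^{-1}z)\rangle$ and $(f*u)(z)=\langle u,\,f(z\,\cdot^{-1})\rangle$ and observe that centrality of $z$ makes the two test functions coincide. The extra remarks on conventions and the mollification alternative are fine but not needed.
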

\begin{proof}
This is immediate from the definitions,
\[ (u * f)(z) := \langle u, \tilde{f}^z\rangle\qquad \tilde{f}^z(y)=f(y^{-1}z)\]
and 
\[ (f * u)(z) := \langle u, \tilde{f}_z\rangle \qquad \tilde{f}_z(y)=f(zy^{-1})\]
\end{proof}

For a regular distribution $f$,
we can evaluate $f(z)$ as long as $z\ne 0$.

\begin{lemma}\label{thm:trace1}
If $z\ne 0$ is a central element in $G$, 
and $f, g$ are regular distributions with compact support on $G$, then
\[ (f * g)(z) = (g * f)(z)\]
\end{lemma}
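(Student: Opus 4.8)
The plan is to reduce everything to the already-established Lemma \ref{distr} by splitting each of $f$ and $g$ into a genuinely singular piece supported near the origin and a smooth piece. The only obstruction to treating $f$ and $g$ as honest functions is their singularity at $0$; since we evaluate the convolution at $z\neq 0$, such a decomposition should dispose of every resulting term, either through Lemma \ref{distr} (when one factor is smooth) or through a support estimate (for the remaining doubly-singular term).

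Concretely, I would fix cutoffs $\phi,\psi\in C_c^\infty(G)$ equal to $1$ on a neighborhood of $0$ and write $f=\phi f+(1-\phi)f=:f_1+f_2$ and $g=\psi g+(1-\psi)g=:g_1+g_2$. Because $f$ and $g$ are smooth on $G\setminus\{0\}$, the pieces $f_2$ and $g_2$ lie in $C_c^\infty(G)$, while $f_1,g_1\in\mathcal{E}'(G)$ are supported in $\supp\phi$ and $\supp\psi$. Expanding by bilinearity,
\[ f*g = f_1*g_1 + f_1*g_2 + f_2*g_1 + f_2*g_2, \]
and symmetrically for $g*f$. The three terms containing at least one smooth factor are handled directly by Lemma \ref{distr}: for instance $(f_1*g_2)(z)=(g_2*f_1)(z)$, and likewise $(f_2*g_1)(z)=(g_1*f_2)(z)$ and $(f_2*g_2)(z)=(g_2*f_2)(z)$. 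Matching these against the corresponding terms in the expansion of $(g*f)(z)$ shows they contribute equally to both sides.

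The main point, and the step I expect to require the most care, is the doubly-singular term $f_1*g_1$. Here I would invoke the support estimate for group convolution, $\supp(f_1*g_1)\subseteq\supp(f_1)\cdot\supp(g_1)$, the product being taken in $G$. Since this product set shrinks to $\{0\}$ as the supports of $\phi$ and $\psi$ shrink, I can choose the cutoffs small enough that it is a neighborhood of $0$ not containing the fixed point $z\neq 0$. Then $f_1*g_1$, and equally $g_1*f_1$, vanishes near $z$, so both evaluate to $0$ there. This simultaneously confirms that $f*g$ is smooth near $z$ (its only possible singularity lying at $0$), so that the pointwise evaluation $(f*g)(z)$ is legitimate in the first place. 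Collecting the four terms then yields $(f*g)(z)=(g*f)(z)$.
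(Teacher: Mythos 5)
Your proposal is correct and follows essentially the same route as the paper's proof: cut off near the origin, handle the terms with a smooth factor via Lemma \ref{distr} (the paper uses the elementary $C_c^\infty$ identity for the doubly-smooth term, but Lemma \ref{distr} covers it too), and kill the doubly-singular term by the support estimate $\supp(\phi f * \phi g)\subseteq U^2\not\ni z$. The only cosmetic difference is your use of two separate cutoffs where the paper uses one.
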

\begin{proof}
Let $U$  be a neighborhood of $0 \in G$ such that $z \notin U^2$. Let $\phi \in C_c^\infty\left(G\right)$ be such that $\phi =1 $ in a neighborhood of $0$, and $\supp \phi \subset U$.    
Then,
\begin{equation*}
f * g=
\phi f * \phi g +\left(1-\phi\right)f * \phi g + 
\phi f * \left(1-\phi\right) g +
\left(1-\phi\right)f * \left(1-\phi\right) g
\end{equation*}
Since $(1-\phi)f, (1-\phi)g\in C_c^\infty(G)$ we have
\begin{equation*}
\left(1-\phi\right)f * \left(1-\phi\right) g \left(z\right) = \left(1-\phi\right) g * \left(1-\phi\right) f\left(z\right)
\end{equation*}
By Lemma \ref{distr},
\begin{equation*}
\left(1-\phi\right)f * \phi g \left(z\right) = \phi g * \left(1-\phi\right) f\left(z\right),\
\phi f * \left(1-\phi\right) g\left(z\right) =
\left(1-\phi\right)g * \phi f \left(z\right)
\end{equation*}
Finally, since $\supp \phi f * \phi g$ and    $\supp \phi g * \phi f$ are contained in $U^2$,  both of them vanish at $z$.

\end{proof}

\begin{proposition}
If $z\ne 0$ is a central element in $G$, then
\begin{equation*}
\tau_z:\msS_H(G)\to \RR \quad \tau_z\left(f\right) := f\left(z\right)
\end{equation*}
is a trace on $\mathscr{S}_H(G)$.
\end{proposition}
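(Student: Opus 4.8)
The plan is to reduce the trace property $\tau_z(f*g)=\tau_z(g*f)$ for the symbol product to Lemma \ref{thm:trace1}, which already establishes the trace identity for compactly supported regular distributions. Linearity of $\tau_z$ is immediate, so the entire content is the identity $(f*g)(z)=(g*f)(z)$, where now $*$ denotes the product in $\mathscr{S}_H(G)$. The obstacle is that elements of $\mathscr{S}_H(G)$ are homogeneous of degree $-Q$ and hence not compactly supported, so Lemma \ref{thm:trace1} does not apply directly; I would bridge this gap using the cutoff $\phi$ from the definition of the symbol product together with a scaling argument.

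First I would record how to recover the value of the principal term at $z$ from the full convolution $k=\phi f * \phi g$. Writing the asymptotic expansion $k\sim k_0+k_1+k_2+\cdots$ with $k_j$ homogeneous of degree $-Q+j$, homogeneity gives $k_j(\delta_t z)=t^{-Q+j}k_j(z)$, so that
\[
t^{Q}\,k(\delta_t z)=k_0(z)+t\,k_1(z)+t^2 k_2(z)+\cdots \xrightarrow{\;t\to 0^+\;} k_0(z).
\]
Since $\delta_t z\to 0$ as $t\to 0^+$, this limit only probes $k$ near the origin, where the asymptotic expansion is valid, which justifies $(f*g)(z)=\lim_{t\to 0^+}t^{Q}(\phi f * \phi g)(\delta_t z)$. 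The same formula with $f$ and $g$ interchanged gives $(g*f)(z)=\lim_{t\to 0^+}t^{Q}(\phi g * \phi f)(\delta_t z)$.

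Next I would observe that for each fixed $t>0$ the point $\delta_t z$ is again central and nonzero: the dilations are automorphisms, hence preserve the center, and $\delta_t z\ne 0$ whenever $z\ne 0$. Since $\phi f$ and $\phi g$ are compactly supported regular distributions, Lemma \ref{thm:trace1} applies at the point $\delta_t z$ and yields $(\phi f * \phi g)(\delta_t z)=(\phi g * \phi f)(\delta_t z)$ for every $t>0$. Multiplying by $t^{Q}$ and letting $t\to 0^+$, the two limits computed above coincide, giving $(f*g)(z)=(g*f)(z)$, which is the trace property.

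I expect the only genuinely delicate point to be the justification that the scaling limit extracts exactly $k_0(z)$, i.e.\ that the error between $\phi f * \phi g$ and its partial asymptotic sums is controlled uniformly enough near $0$ to vanish after multiplication by $t^{Q}$; this is where one must invoke the precise meaning of the asymptotic expansion in the homogeneous calculus. Everything else — linearity, the centrality and nonvanishing of $\delta_t z$, and the appeal to Lemma \ref{thm:trace1} — is routine. Finally, one should note that the whole construction is independent of the cutoff $\phi$ precisely because the symbol product is independent of $\phi$, which is already built into its definition.
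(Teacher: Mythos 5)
Your proposal is correct and follows essentially the same route as the paper: the paper's proof likewise writes $(f*g)(z)=\lim_{t\downarrow 0}t^{Q}(\phi f*\phi g)(\delta_t z)$ using the definition of the symbol product and then applies Lemma \ref{thm:trace1} at the nonzero central point $\delta_t z$. Your additional remarks justifying the scaling limit and the centrality of $\delta_t z$ are correct elaborations of steps the paper leaves implicit.
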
 
\begin{proof}
By definition of the product in $\msS_H$(G),
\[ (f * g)(z) = \lim_{t\downarrow 0} t^Q (\phi f * \phi g) (\delta_t z)\]
which, by Lemma \ref{thm:trace1}, is equal to 
\[ (g * f)(z) = \lim_{t\downarrow 0} t^Q (\phi g * \phi f) (\delta_t z)\]

\end{proof}

\section{The main result}\label{sec:main}

In this section we specialize to the Heisenberg group.
Throughout this section, $G=\RR^{2n+1}$ will denote the Heisenberg group, with notations as in Example \ref{ex:HGrp}.

\subsection{Fourier transform}\label{conventions}

The Fourier transform is defined for Schwartz class functions $f \in \mcS(\RR^{2n+1})$ by 
\begin{equation*}
\hat{f}\left(y, \ft\right):= \int f\left(x, t\right) e^{-i\left(xy+t \ft\right)}dx dt
\end{equation*}
and is then extended to tempered distributions $\mcS'(\RR^{2n+1})$.

Given a compactly supported distribution $k\in \Psi_H^0(G)$, let $\hat{k}\in C^\infty(\RR^{2n+1})$ be its Fourier transform.
Define two functions $\sigma_\pm \in C^\infty(\RR^{2n})$,
\[ \sigma_+(x) := \lim_{\lambda\to +\infty} \hat{k}(\lambda x, \lambda^2)\qquad  \sigma_-(x) := \lim_{\lambda\to -\infty} \hat{k}(\lambda x, -\lambda^2)\qquad x\in \RR^{2n}\]
Alternatively, in the asymptotic expansion
\[ k\sim k_0+k_1+k_2+\cdots\]
the principal term $k_0$ is  homogeneous of degree $-Q$, and represents an element in $\Symb(G)$.
The Fourier transform $\hat{k}_0$ is a smooth function on $\RR^{2n+1}\minus \{0\}$ that is homogeneous of degree $0$ for the dilations, i.e.
\[ \hat{k}_0(\lambda x, \lambda^2t) = \hat{k}_0(x,t)\qquad x\in \RR^{2n}, t\in \RR, \lambda>0\]
Then
\[ \sigma_+(x) := \hat{k}_0(x,+1)\qquad \sigma_-(x) := \hat{k}_0(x,-1)\]
In particular, if $k\in \Psi_H^{-1}(G)$ then $k_0=0$ and $\sigma_{\pm}=0$.

If we wish to emphasize the dependence of $\sigma_\pm$ on $k$ we denote it as $\sigma^k_\pm$.
The map $k\mapsto \sigma_+^k$ is an algebra homomorphism from the convolution algebra $\Psi^0(G)$ to the Weyl algebra of $\RR^{2n}$,
\begin{equation}\label{sharp product}
    \sigma_+^{k\ast h}(v) = (\sigma^k_+\# \sigma^h_+)(v)  = \frac{1}{(2\pi)^{2n}} \iint e^{2i\,\omega(x,y)}\sigma^k_+(v+x)\sigma^h_+(v+y) dx\,dy
\end{equation} 
The map $k\to \sigma_-^k$ is an anti-homomorphism, with $\sigma_-^{k\ast h}=\sigma^h_-\# \sigma_-^k$.

\subsection{The old trace}

Our construction of the trace in \cite{GvE22} was strongly influenced by the  work of Epstein and Melrose in  \cite{EMxx}, \cite{EM98}. We summarize our construction here. For an  in-depth treatment, see \cite{GvE22}.

The smooth functions $\sigma_\pm$, defined above, have asymptotic expansions for large $\|x\|\to \infty$ of the form
\begin{equation}\label{eq:asymptotic_expansion}
\sigma_+(x) \sim \sum_{l=0}^\infty w_{2l}(x)\qquad \sigma_-(x) \sim \sum_{l=0}^\infty (-1)^l w_{2l}(x)    
\end{equation}
where $w_{2l}$ is smooth on $\RR^{2n}\minus 0$,
and homogeneous of degree $-2l$.

\begin{definition}\label{def:tau}
The trace $\tau:\Symb(G)\to \CC$  is 
\[ \tau(k) := \frac{1}{(2\pi)^n} \int_{\RR^{2n}}  \mathcal{R}(x) \,dx \qquad k\in \Symb(G)\]
where $\mathcal{R}$ is the function
\[ \mathcal{R} := \sigma_+ - (-1)^{n}\sigma_- - \sum_{l=0}^{n-1} \epsilon_lw_{2l}\qquad
\epsilon_l = 1-(-1)^{n+l} = 
\begin{cases}
0&l+n\;\text{even}\\
2&l+n\;\text{odd}
\end{cases}
\]
Note that $\mathcal{R}$ is an integrable smooth function defined on $\RR^{2n}\minus \{0\}$.
\end{definition}

For a proof that $\tau$ is a trace, see Theorem 5.8 and Proposition 5.10 in \cite{GvE22}.

\begin{remark}
If $\sigma_\pm$ are Schwartz class functions,
then $w_{2l}=0$ for all $l=0,1,2,\dots$, and simply
\[ \tau(k) = \frac{1}{(2\pi)^n} \int_{\RR^{2n}}  \sigma_+(x)\,dx - (-1)^{n} \frac{1}{(2\pi)^n} \int_{\RR^{2n}} \sigma_-(x) \,dx\]
In this case, the integral
\[\frac{1}{(2\pi)^n} \int_{\RR^{2n}}  \sigma_+(x)\,dx\]
is the usual trace of the trace class operator on $L^2(\RR^n)$
determined by the distribution $k_0$ in the Schr\"odinger representation of the Heisenberg group.

Likewise,
\[(-1)^n\frac{1}{(2\pi)^n} \int_{\RR^{2n}}  \sigma_-(x)\,dx\]
is the  trace of the trace class operator on $L^2(\RR^n)$ determined by $k$ in the dual of the Schr\"odinger representation.
(For details, see \cite{GvE22}, Section 5.3.)

\end{remark}

The main result of this paper is that the trace $\tau$ can be expressed as a linear combination of traces defined in Section \ref{sec:traces}.
\begin{theorem}\label{thm:main}
Let $G=\RR^{2n+1}$ be the Heisenberg group. 
Let 
\[\tau_+ := \tau_{(0,+1)}\qquad \tau_-:=\tau_{(0,-1)}\]
be the traces defined in Section \ref{sec:traces}, for the central elements $(0,\pm 1)\in G$.
Then
\[ \tau = \frac{(2\pi)^{n+1}}{2(n!) i^{n+1}}\left(\tau_{+}-(-1)^n\tau_{-}\right) \]
\end{theorem}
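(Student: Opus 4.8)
The plan is to compute the two evaluations $\tau_{\pm}(k)=k_0(0,\pm 1)$ directly from the Fourier transform and match the outcome against Definition \ref{def:tau}. The starting point is the relation between $\hat k_0$ and the symbols $\sigma_\pm$. Since $\hat k_0$ is homogeneous of degree $0$ and $(y,s)=\delta_{\sqrt{|s|}}(y/\sqrt{|s|},\pm 1)$ for $\pm s>0$, we have $\hat k_0(y,s)=\sigma_+(y/\sqrt{s})$ for $s>0$ and $\hat k_0(y,s)=\sigma_-(y/\sqrt{|s|})$ for $s<0$; thus $\hat k_0$ is entirely encoded by $\sigma_\pm$. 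On the other side, $k_0$ is homogeneous of degree $-Q=-(2n+2)$, so along the central axis $k_0(0,t)=|t|^{-(n+1)}k_0(0,\pm 1)$, and the two numbers $k_0(0,\pm 1)$ are recovered from $\hat k_0$ by the inverse Fourier transform evaluated at $(0,\pm 1)$:
\[ k_0(0,\pm 1)=\frac{1}{(2\pi)^{2n+1}}\int_{\RR^{2n+1}}\hat k_0(y,s)\,e^{\pm i s}\,dy\,ds. \]

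First I would treat the model case in which $\sigma_\pm$ are Schwartz, so that all $w_{2l}$ vanish and every integral converges. Writing $C_\pm=\int_{\RR^{2n}}\sigma_\pm(u)\,du$ and integrating first in $y$ via the substitution $y=\sqrt{|s|}\,u$ (which contributes a factor $|s|^{n}$), the problem reduces to the one-dimensional integrals $\int_0^\infty s^n e^{\pm i s}\,ds$. Evaluating these with an Abel convergence factor gives $\int_0^\infty s^n e^{is}\,ds=n!\,i^{n+1}$ and $\int_0^\infty s^n e^{-is}\,ds=n!\,(-i)^{n+1}$. Substituting and forming the combination appearing in the theorem, the cross terms organize so that
\[ \tau_+-(-1)^n\tau_-=\frac{2\,n!\,i^{n+1}}{(2\pi)^{2n+1}}\bigl(C_+-(-1)^nC_-\bigr). \]
Since in the Schwartz case $\tau(k)=\frac{1}{(2\pi)^n}\bigl(C_+-(-1)^nC_-\bigr)$, solving for $\tau$ yields exactly the constant $\frac{(2\pi)^{n+1}}{2(n!)\,i^{n+1}}$, confirming the formula here.

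For the general case the integrals $\int\sigma_\pm$ diverge and the interchange of the $y$- and $s$-integrations is not licit; this is where the lemma of Section \ref{sec:fourier} enters. I would use it to compute $k_0(0,\pm 1)$ rigorously as the inverse Fourier transform of the degree-$0$ homogeneous function $\hat k_0$, the lemma supplying the correct finite-part regularization in terms of the asymptotic data $w_{2l}$. The key point is that the regularization dictated by the lemma is precisely the subtraction $\sum_{l=0}^{n-1}\epsilon_l w_{2l}$ built into $\mathcal{R}$: the terms $w_{2l}$ of degree $-2l>-2n$ are integrable and need no correction, the borderline term $l=n$ is harmless because $\epsilon_n=1-(-1)^{2n}=0$, and the non-integrable terms with $l<n$ and $l+n$ odd are exactly those removed with coefficient $\epsilon_l=2$. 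Consequently the divergent quantity $C_+-(-1)^nC_-$ is replaced throughout by its finite part $\int_{\RR^{2n}}\mathcal{R}(x)\,dx=(2\pi)^n\tau(k)$, while the constant $\frac{2\,n!\,i^{n+1}}{(2\pi)^{2n+1}}$ is unaffected, and the theorem follows.

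The main obstacle is this last step: making the passage to the general case rigorous, i.e. justifying that the formal interchange of integrations, once regularized by the Section \ref{sec:fourier} lemma, produces exactly the finite part $\int\mathcal{R}$ with the same constant as in the Schwartz model. Concretely one must track how each asymptotic term $w_{2l}$ of $\sigma_\pm$ feeds through the $y$-integration (producing a factor $|s|^{l}$) into the one-dimensional $s$-integral, and verify that the contributions of the subtracted terms cancel so that no spurious constant is introduced and the borderline degree $-2n$ term indeed drops out. Checking the precise phase $i^{n+1}$ and the factor $2$ through this bookkeeping, consistently with the Abel-regularized model computation, is the delicate part; everything else is a matter of the homogeneity substitutions recorded above.
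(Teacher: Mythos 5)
Your overall strategy coincides with the paper's: compute $\tau_{\pm}(k)=k_0(0,\pm 1)$ as a regularized inverse Fourier transform of the degree-$0$ function $\hat k_0$, substitute $y=\sqrt{|s|}\,u$, and reduce to the Gamma integrals $\int_0^\infty s^n e^{\pm is}\,ds$ (this is exactly Lemma \ref{lem:tau_z_as_beta_limit}, made rigorous by Lemma \ref{lem:beta_and_delta_to_zero}). Your Schwartz-class model computation is correct and does reproduce the constant. The problem is that the general case --- which you correctly identify as the main obstacle and then do not carry out --- is where essentially all of the content lies, and your sketch of how it would go rests on two incorrect assertions.

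First, the terms $w_{2l}$ with $l<n$ are \emph{not} integrable: $w_{2l}$ is homogeneous of degree $-2l$ on $\RR^{2n}$, so $\int_{\|x\|\ge 1}|w_{2l}(x)|\,dx$ diverges for every $l\le n$. Their contributions to $\tau_z$ do vanish, but not for reasons of integrability; after regularization one must show
\[ \lim_{\beta\downarrow 0}\int_{\RR^{2n}}\left(\frac{w_{2l}(x)}{(\beta\|x\|^2-is)^{n+1}}+\frac{(-1)^l w_{2l}(x)}{(\beta\|x\|^2+is)^{n+1}}\right)dx=0, \]
which the paper does by a contour-integration argument (Lemma \ref{lem:I_l}); the cancellation here is between the $\sigma_+$- and $\sigma_-$-parts \emph{within a single} $\tau_z$, and it is not pointwise because the two denominators differ. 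Second, and more seriously, the borderline term $w_{2n}$ does not ``drop out'': it contributes a nonzero multiple of the residue trace $\Res(k)$ to each of $\tau_+$ and $\tau_-$ separately (Lemma \ref{lem:I_n}), giving $\frac{(2\pi)^{n+1}(\mp i)^{n+1}}{n!}\,\tau_{\pm}=\tau\pm\pi i\Res$ (Proposition \ref{prop:t_z_formula}). In particular neither $\tau_+$ nor $\tau_-$ is individually proportional to $\tau$; the theorem holds only because the $\Res$-terms cancel in the specific combination $\tau_+-(-1)^n\tau_-$. The observation that $\epsilon_n=0$ explains why $\mathcal{R}$ needs no $w_{2n}$-subtraction, but it says nothing about the $w_{2n}$-contribution to $\tau_z$, which must be computed. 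Without these two computations your argument establishes the theorem only for symbols with all $w_{2l}=0$.
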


\section{A lemma about Fourier transforms}\label{sec:fourier}

For an integrable  function $g\in L^1(\RR^{2n+1})$, denote the Fourier transform by
\begin{equation*}
\mcF(g)(y, \ft):= \int g(x, t) e^{-i(xy+t \ft)}dx dt
\end{equation*}
with inverse Fourier transform
\begin{equation*}
\mcF^*(g)(y, \ft):= \frac{1}{(2\pi)^{2n+1}}\int g(x, t) e^{i(xy+t \ft)}dx dt
\end{equation*}
As usual, the  Fourier transform $\mcF(T)$ of a tempered distribution $T$ is defined by $\langle \mcF(T), \varphi\rangle = \langle T, \mcF(\varphi)\rangle $, $\varphi\in \mcS(\RR^{2n+1})$, and similarly for $\mcF^*(T)$.

In this section we prove the following lemma, which expresses the Fourier transform of a homogeneous distribution as a limit of the Fourier transforms of integrable functions.

\begin{lemma}\label{lem:beta_and_delta_to_zero}
Let $f$ be a smooth function on $\RR^{2n+1}\minus\{0\}$, homogeneous of degree $0$ with respect to the dilations, i.e.
\[ f(\lambda x, \lambda^2t) = f(x,t)\qquad x\in \RR^{2n}, t\in \RR, \lambda>0\]
Then
\[ \mcF^*(f)(0, \ft) = \lim_{\beta\downarrow 0}\lim_{\delta \downarrow 0}\mcF^*\left(f(x,t)e^{-\beta \|x\|^2 - \delta |t|}\right)\left(0, \ft\right)
\]
\end{lemma}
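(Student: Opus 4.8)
The plan is to exploit the homogeneity of $f$ to understand its Fourier transform $\mcF^*(f)$ as a tempered distribution, and then to justify interchanging the limits with the pairing against test functions. Since $f$ is homogeneous of degree $0$ (hence bounded on the unit ``sphere'' $\{\|x\|^2 + |t| = 1\}$ and bounded everywhere away from $0$), it defines a tempered distribution, and its Fourier transform $\mcF^*(f)$ is a well-defined tempered distribution, homogeneous of degree $-Q = -(2n+2)$ under the dual dilations. The central claim is that the ``value'' of this distribution at the fiber $(0,\ft)$ makes sense and is the limit of the genuine (integrable) functions $\mcF^*(f_{\beta,\delta})$ evaluated at $(0,\ft)$, where $f_{\beta,\delta}(x,t) = f(x,t)e^{-\beta\|x\|^2 - \delta|t|}$.

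First I would fix the inner limit $\delta\downarrow 0$ with $\beta>0$ held fixed. With $\beta>0$, the Gaussian cutoff in $x$ already makes $f_{\beta,\delta}\in L^1$ for every $\delta\ge 0$, and I would show that $f_{\beta,0} = f(x,t)e^{-\beta\|x\|^2}$ is integrable and that $\mcF^*(f_{\beta,\delta})(0,\ft) \to \mcF^*(f_{\beta,0})(0,\ft)$ as $\delta\downarrow 0$. Evaluated at $y=0$, the Fourier transform reduces to $\frac{1}{(2\pi)^{2n+1}}\int f(x,t)e^{-\beta\|x\|^2}e^{-\delta|t|}e^{it\ft}\,dx\,dt$, and I would control the $t$-integral by splitting off the decay coming from homogeneity: for large $|t|$, $f(x,t) = f(x/\sqrt{|t|},\pm 1)$ is bounded, while $e^{-\beta\|x\|^2}$ supplies $x$-decay and oscillation in $t$ via $e^{it\ft}$ gives conditional convergence. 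The point is that $e^{-\delta|t|}\to 1$ pointwise and dominated/oscillatory-integral estimates let me pass to the limit. This reduces the statement to
\[
\mcF^*(f)(0,\ft) = \lim_{\beta\downarrow 0}\mcF^*\!\left(f(x,t)e^{-\beta\|x\|^2}\right)(0,\ft).
\]

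For the outer limit $\beta\downarrow 0$, I would test against a Schwartz function in the $\ft$ variable: multiply by $\psi(\ft)\in\mcS(\RR)$ and integrate, turning the ``evaluation at $y=0$'' into a genuine distributional pairing. Concretely, $\int \mcF^*(f_{\beta,0})(0,\ft)\psi(\ft)\,d\ft$ equals $\langle \mcF^*(f_{\beta,0}), \delta_{y=0}\otimes\psi\rangle$ interpreted appropriately, which by the definition of $\mcF^*$ on distributions becomes $\langle f e^{-\beta\|x\|^2}, \mcF^*(\delta_{y=0}\otimes\psi)\rangle$ up to constants. As $\beta\downarrow 0$, the multiplier $e^{-\beta\|x\|^2}\to 1$ in the topology for which $f\cdot(\text{Schwartz in }x) $ pairings converge, and since $f$ is a tempered distribution this pairing converges to $\langle f, \mcF^*(\delta_{y=0}\otimes\psi)\rangle = \langle \mcF^*(f), \delta_{y=0}\otimes\psi\rangle$. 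The homogeneity of degree $-Q$ of $\mcF^*(f)$ guarantees that its restriction to the line $\{y=0\}$ is a well-defined homogeneous distribution in $\ft$, so the evaluation $\mcF^*(f)(0,\ft)$ is meaningful and is recovered by this pairing.

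The main obstacle I anticipate is the justification of evaluating the distribution $\mcF^*(f)$ on the set $\{y=0\}$, which has measure zero, and of the convergence $\mcF^*(f_{\beta,0})(0,\ft)\to\mcF^*(f)(0,\ft)$ in the outer limit: a priori $\mcF^*(f)$ is only a distribution, and restriction to a lower-dimensional subspace is not automatic. I expect this is controlled precisely by the degree-$0$ homogeneity of $f$: the wavefront set of $f$, and hence of $\mcF^*(f)$, is constrained so that $\{y=0\}$ is noncharacteristic for the restriction, making $\mcF^*(f)\big|_{y=0}$ a well-defined homogeneous distribution of degree $-Q$ in the single variable $\ft$ (equivalently a sum of a locally integrable homogeneous function plus possible $\delta$-type terms in $\ft$). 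The careful bookkeeping of these oscillatory $t$-integrals near $t=\infty$, and showing the two iterated limits commute with the pairing, is where the real work lies; everything else is a controlled passage to the limit using dominated convergence and the temperedness of $f$.
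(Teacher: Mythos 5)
There is a genuine gap in your argument for the inner limit. You assert that the Gaussian cutoff in $x$ makes $f_{\beta,\delta}\in L^1$ ``for every $\delta\ge 0$'' and that you would show $f_{\beta,0}=f(x,t)e^{-\beta\|x\|^2}$ is integrable. This is false: $f$ is homogeneous of degree $0$, hence bounded away from the origin, but has no decay in $t$, so $\int |f(x,t)|e^{-\beta\|x\|^2}\,dx\,dt$ diverges in the $t$ direction. Consequently dominated convergence does not apply to the passage $\delta\downarrow 0$, and your appeal to ``oscillation in $t$'' and ``conditional convergence'' names the difficulty without resolving it. The paper's proof supplies the missing mechanism: after cutting off a neighborhood of $t=0$ with $\chi(t)$, one integrates by parts $k\ge 2$ times in $t$, writing $\mcF^*(\chi fe^{-\beta\|x\|^2-\delta|t|})=(-i\ft)^{-k}\mcF^*\bigl(\partial_t^k(\chi fe^{-\beta\|x\|^2-\delta|t|})\bigr)$, and uses the parabolic homogeneity to get $|\partial_t^p(\chi f)|\le C|t|^{-p}$. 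The Leibniz cross terms, in which derivatives fall on $e^{-\delta|t|}$, have $L^1$ norms of size $O(\delta^{k-1})$, $O(\delta^{k-1}\log\delta)$, or $O(\delta^{k-p})$, and these quantitative estimates are exactly what kills them as $\delta\downarrow 0$; only the $p=k$ term survives, and for it $\partial_t^k(\chi f)e^{-\beta\|x\|^2}$ \emph{is} in $L^1$, so dominated convergence finally applies. The same integration-by-parts device handles the outer limit $\beta\downarrow 0$. Without some version of this gain of decay in $t$, your two limit interchanges are unjustified.

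A secondary point: the issue you flag as the main obstacle --- restricting the distribution $\mcF^*(f)$ to $\{y=0\}$, which you propose to control via wavefront sets --- is not actually where the difficulty lies. Since $f$ is locally integrable, smooth away from the origin, and homogeneous, $\mcF^*(f)$ is a smooth function on $\RR^{2n+1}\minus\{0\}$ (a standard fact the paper simply quotes), so the evaluation $\mcF^*(f)(0,\ft)$ for $\ft\ne 0$ is an ordinary pointwise evaluation and no microlocal argument is needed. Your distributional-pairing framework for the outer limit could in principle be made to work, but as written it defers all of the analysis (``the careful bookkeeping \dots is where the real work lies'') to precisely the estimates that constitute the proof.
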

Note that $\mcF^*\left(f\right)$ is smooth  away from $0$, and so are $\mcF^*\left(fe^{-\beta \|x\|^2}\right)$ and $\mcF^*\left(fe^{-\beta \|x\|^2 - \delta |t|}\right)$ for $\delta, \beta >0$
The proof proceeds in two steps.

\begin{lemma} \label{lem:beta_to_zero}
Let $f$ be a smooth function on $\RR^{2n+1}\minus\{0\}$, homogeneous of order $0$ with respect to the dilations. Let $\beta >0$. For $\ft \ne 0$ 
\begin{equation*} \lim_{\delta \downarrow 0}\mcF^*\left(fe^{-\beta \|x\|^2 - \delta |t|}\right)\left(0, \ft\right)= 
\mcF^*\left(fe^{-\beta \|x\|^2  }\right)\left(0, \ft\right)\end{equation*}
\end{lemma}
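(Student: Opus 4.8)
The plan is to reduce the statement to a one-dimensional Abel-regularization fact about the Fourier transform in the single variable $t$, and then to settle it by integration by parts. For $\delta>0$ the function $f(x,t)\,e^{-\beta\|x\|^2-\delta|t|}$ is genuinely integrable on $\RR^{2n+1}$ (Gaussian decay in $x$, exponential decay in $t$, and $f$ bounded by its degree-$0$ homogeneity), so Fubini lets me integrate in $x$ first. Setting
\[ F_\beta(t) := \int_{\RR^{2n}} f(x,t)\, e^{-\beta\|x\|^2}\, dx, \]
I would record that $F_\beta$ is a bounded smooth function of $t$ and that
\[ \mcF^*\!\left(fe^{-\beta\|x\|^2-\delta|t|}\right)(0,\ft)=\frac{1}{(2\pi)^{2n+1}}\int_\RR F_\beta(t)\,e^{-\delta|t|}\,e^{i\ft t}\,dt, \]
while, by factoring $\mcF^*=\mcF^*_x\mcF^*_t$ and using the Gaussian decay in $x$ to evaluate the $x$-transform at $y=0$, the right-hand side of the lemma is $\frac{1}{(2\pi)^{2n+1}}\,\mcF^*_t(F_\beta)(\ft)$, the value at $\ft$ of the tempered-distributional transform of $F_\beta$, which is smooth for $\ft\neq0$. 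The lemma thus becomes the assertion that the Abel-regularized integral of $F_\beta$ converges pointwise to $\mcF^*_t(F_\beta)(\ft)$ for $\ft\neq0$.

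The key analytic input is the behaviour of $F_\beta$ at infinity. Using $f(x,t)=f\bigl(x/\sqrt{|t|},\operatorname{sgn}t\bigr)$ and the substitution $x\mapsto\sqrt{|t|}\,x$, I would show that $F_\beta(t)\to c_\pm:=f(0,\pm1)(\pi/\beta)^n$ as $t\to\pm\infty$; this is precisely why $F_\beta$ is not integrable and why $\mcF^*_t(F_\beta)$ is singular at $\ft=0$. Differentiating under the integral and Taylor-expanding $f(\cdot,\pm1)$ to second order at $0$, the first-order term integrates to zero by the parity of the Gaussian, leaving the decay $F_\beta'(t)=O(|t|^{-2})$ and hence $F_\beta'\in L^1(\RR)$. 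This leading-order cancellation is the crux, and the step I expect to demand the most care.

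With these facts in hand I would integrate by parts in $t$, writing $e^{i\ft t}=(i\ft)^{-1}\frac{d}{dt}e^{i\ft t}$ for $\ft\neq0$ and using the weak derivative $\frac{d}{dt}e^{-\delta|t|}=-\delta\operatorname{sgn}(t)e^{-\delta|t|}$; there is no boundary contribution at $t=0$ (continuity of $e^{-\delta|t|}$) nor at $\pm\infty$ (where $e^{-\delta|t|}\to0$). This yields two terms. In the first,
\[ \frac{-1}{i\ft}\int_\RR F_\beta'(t)\,e^{-\delta|t|}e^{i\ft t}\,dt \longrightarrow \frac{-1}{i\ft}\int_\RR F_\beta'(t)\,e^{i\ft t}\,dt \]
by dominated convergence, since $F_\beta'\in L^1$. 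The second term, $\frac{\delta}{i\ft}\int_\RR \operatorname{sgn}(t)F_\beta(t)e^{-\delta|t|}e^{i\ft t}\,dt$, I would show vanishes as $\delta\downarrow0$ for $\ft\neq0$: splitting at $t=0$ and subtracting the limits $c_\pm$, the constant parts contribute $\delta c_\pm/(\delta\mp i\ft)\to0$, while the remainders are controlled by a standard $\varepsilon$–$T$ Abel estimate using the boundedness of $F_\beta$ and $\int_0^\infty e^{-\delta t}\,dt=\delta^{-1}$.

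Finally, the surviving limit $\frac{-1}{i\ft}\int_\RR F_\beta'(t)e^{i\ft t}\,dt$ is identified with $\mcF^*_t(F_\beta)(\ft)$ through the distributional differentiation rule $\mcF^*_t(F_\beta')(\ft)=-i\ft\,\mcF^*_t(F_\beta)(\ft)$, which is valid as an identity of functions away from $\ft=0$ because $F_\beta'\in L^1$ makes $\mcF^*_t(F_\beta')$ continuous. Dividing by the common constant $(2\pi)^{-(2n+1)}$ appearing on both sides then gives exactly the claimed equality, completing the proof.
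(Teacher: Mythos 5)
Your route is genuinely different from the paper's. The paper never integrates out the $x$ variable: it splits $f$ by a cutoff $\chi(t)$ supported in $|t|\ge 1$, handles the piece $(1-\chi)fe^{-\beta\|x\|^2}$ by dominated convergence in $L^1(\RR^{2n+1})$, and for the piece $\chi f$ integrates by parts $k\ge 2$ times in $t$, using the Leibniz rule together with the estimates $|\partial_t^p(\chi f)|\le C|t|^{-p}$ and $|\partial_t^{k-p}e^{-\delta|t|}|\le\delta^{k-p}e^{-\delta|t|}$ to show that every cross term has $L^1$-norm $O(\delta^{k-p})$ or $O(\delta^{k-1}\log\delta)$ and hence vanishes as $\delta\downarrow 0$. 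Your reduction to the one-variable function $F_\beta(t)=\int f(x,t)e^{-\beta\|x\|^2}dx$ and a single integration by parts plus an Abel estimate is more economical and more transparent about \emph{why} the statement is true (the Abel regularization of a function that tends to constants $c_\pm$ at $\pm\infty$ with $L^1$ derivative converges to its distributional Fourier transform away from $0$); the price is that you must justify the Fubini/factorization step identifying the pointwise value $\mcF^*\bigl(fe^{-\beta\|x\|^2}\bigr)(0,\ft)$ of a distribution with the iterated partial transform $\tfrac{1}{(2\pi)^{2n+1}}\mcF^*_t(F_\beta)(\ft)$, which the paper avoids by never leaving the $(2n+1)$-dimensional transform.

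The one step that does not hold up as written is the derivation of $F_\beta'(t)=O(|t|^{-2})$. Taylor-expanding $f(\cdot,\pm 1)$ and killing the first-order term by Gaussian parity gives an asymptotic for $F_\beta$ itself, namely $F_\beta(t)=c_\pm+O(|t|^{-1})$, and an asymptotic expansion cannot in general be differentiated term by term. The claim is nevertheless true, and the correct argument runs the same computation on $\partial_t f$ directly: $\partial_t f$ is homogeneous of degree $-2$, so
\begin{equation*}
F_\beta'(t)=\int_{\RR^{2n}}\partial_t f(x,t)\,e^{-\beta\|x\|^2}dx=|t|^{-1}\int_{\RR^{2n}}(\partial_t f)\bigl(x/\sqrt{|t|},\pm 1\bigr)e^{-\beta\|x\|^2}dx ,
\end{equation*}
and the crucial cancellation is that $(\partial_t f)(0,\pm 1)=0$ because $f(0,t)$ is constant on each half of the center by homogeneity. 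With the zeroth-order Taylor coefficient of $(\partial_t f)(\cdot,\pm1)$ gone, the first-order term integrates to zero by parity and the remainder is $O(\|u\|^2)$ uniformly, giving the extra factor $|t|^{-1}$ and hence $F_\beta'\in L^1$. You should also record that $F_\beta'$ is locally integrable near $t=0$ (the bound $|\partial_t f(x,t)|\le C(\|x\|^4+t^2)^{-1/2}$ gives at worst a $\log(1/|t|)$ blow-up of $F_\beta'$ for $n=1$), so that the integration by parts over all of $\RR$ is legitimate. With these repairs your proof is complete.
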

\begin{proof}
Choose a smooth function $\chi \in C^\infty(\RR)$ with 
\[\chi(t) = \begin{cases}
    0&\text{if}\; |t|\le 1\\
    1&\text{if}\; |t|\ge 2
\end{cases}\]
Then $(1-\chi(t)) f(x,t)e^{-\beta \|x\|^2}$ is in $L^1\left(\RR^{2n+1}\right)$.
By the dominated convergence theorem,
\begin{equation}\label{first}
\lim_{\delta \downarrow 0}\mcF^*\left((1-\chi) f e^{-\beta \|x\|^2 - \delta |t|}\right)\left(0, \ft\right)=
\mcF^*\left((1-\chi) f e^{-\beta \|x\|^2}\right)\left(0, \ft\right)
\end{equation}
Assume $\beta>0$, $\delta>0$. Then $\chi(t)f(x,t)e^{-\beta\|x\|^2-\delta|t|}$ is a Schwartz class function. Therefore,
\begin{multline}\label{der}
\mcF^*\left(\chi f e^{-\beta\|x\|^2-\delta|t|}\right) =
\frac{1}{\left(-i\ft\right)^k} \mcF^*\left(\partial_t^k (\chi f e^{-\beta\|x\|^2-\delta|t|})\right)\\   
= \frac{1}{\left(-i\ft\right)^k}  \sum_{p=0}^k\binom{k}{p} \mcF^*\left(\partial_t^p (\chi f)\partial_t^{k-p} e^{-\beta\|x\|^2-\delta|t|}\right)  
\end{multline} 
Note that, while $e^{-\beta\|x\|^2-\delta|t|}$ is not smooth at $t\ne 0$,
the above formula makes sense because $\partial_t^p (\chi f)$ is zero for $t\in [-1,1]$.

Since differentiating in $t$ reduces the degree of homogeneity of $f$, for each $p=0,1,2,\dots$ there exists $C$ such that for all $(x,t)\in \RR^{2n+1}$:
\[ |\partial_t^p (\chi(x,t) f(x,t))| \le C |t|^{-p}  \]
Also
\[ |\partial_t^{k-p} e^{-\beta\|x\|^2- \delta |t|}| \le \delta^{k-p} e^{-\beta\|x\|^2-\delta|t|}\]
and so
\[ |\, \partial_t^p (\chi f)\partial_t^{k-p}e^{-\beta\|x\|^2-\delta|t|}\,|\le C \delta^{k-p}|t|^{-p}e^{-\beta\|x\|^2-\delta|t|}  \]
With $\beta>0$ fixed, we obtain (with different constant $C$),
\[ \|\,\partial_t^p (\chi f)\partial_t^{k-p}e^{-\beta\|x\|^2-\delta|t|}\,\|_{L^1}\le C \delta^{k-p} \int_{\|t\|>1} |t|^{-p}e^{-\delta |t|} dt\]
For small $\delta>0$,
\begin{equation*}
\int_{|t|\ge 1} |t|^{-p} e^{-\delta |t|}dt =\begin{cases}
  \mcO\left(\delta^{-1}\right) &\text{ if } p=0\\
    \mcO\left(\log \delta\right) &\text{ if } p=1\\
    \mcO\left(1\right) &\text{ if } p\ge 2
\end{cases}
\end{equation*}
and so
\begin{equation*}
\|\,\partial_t^p (\chi f)\partial_t^{k-p}e^{-\beta\|x\|^2-\delta|t|}\,\|_{L^1} =\begin{cases}
  \mcO\left(\delta^{k-1}\right) &\text{ if } p=0\\
    \mcO\left(\delta^{k-1}\log \delta\right) &\text{ if } p=1\\
    \mcO\left(\delta^{k-p}\right) &\text{ if } p\ge 2
\end{cases}
\end{equation*}
We now assume that $k\ge 2$. 
Then all these $L^1$-norms converge to $0$ if $\delta\downarrow 0$, except when $p=k$. 
Since $\|\mcF^*(g)\|_\infty \le \|g\|_{L^1}$, equation (\ref{der}) implies
\[
\lim_{\delta\downarrow 0} \mcF^*\left(\chi f e^{-\beta\|x\|^2-\delta|t|}\right)(0,s) 
=  \lim_{\delta\downarrow 0}   \frac{1}{\left(-i\ft\right)^k} \mcF^*\left(\partial_t^k (\chi f)\,e^{-\beta\|x\|^2-\delta|t|}\right)  (0,s)
\]
Since $\partial_t^k(\chi f)e^{-\beta \|x\|^2}$ is in $L^1$,  dominated convergence  gives,
\begin{multline*}
\lim_{\delta \downarrow 0} \mcF^*\left(\partial_t^k (\chi f)e^{-\beta \|x\|^2 - \delta |t|}\right)\left(0, \ft\right)=
\mcF^*\left(\partial_t^k (\chi f)e^{-\beta \|x\|^2 }\right)\left(0, \ft\right)\\
= \mcF^*\left(\partial_t^k (\chi f e^{-\beta \|x\|^2 })\right)\left(0, \ft\right) = (-is)^k \mcF^*\left(\chi f e^{-\beta \|x\|^2 }\right)\left(0, \ft\right)
\end{multline*}
In other words,
\[
\lim_{\delta\downarrow 0} \mcF^*\left(\chi f e^{-\beta\|x\|^2-\delta|t|}\right)(0,s) 
= \mcF^*\left(\chi f e^{-\beta \|x\|^2 }\right)\left(0, \ft\right)
\]
Combined with \eqref{first} this completes the proof.

\end{proof}
\begin{lemma}  
Let $f$ be a smooth function on $\RR^{2n+1}\minus\{0\}$, homogeneous of degree $0$ with respect to the dilations. For $\ft \ne 0$,
\begin{equation*} \lim_{\beta \downarrow 0}\mcF^*\left(fe^{-\beta \|x\|^2 }\right)\left(0, \ft\right)= 
\mcF^*\left(f\right)\left(0, \ft\right)\end{equation*}
\end{lemma}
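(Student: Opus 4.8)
The plan is to take the limit $\beta\downarrow 0$ by separating the origin, where $f$ is genuinely singular, from the region near infinity, where $f$ is merely bounded but fails to be integrable. The essential difficulty to keep in mind is that at the evaluation point $(0,\ft)$ the frequency in the $x$-variables is zero, so $\mcF^*$ integrates $f$ against $e^{it\ft}$ with \emph{no} oscillation and no decay in the $x$-directions. This is precisely why the factor $e^{-\beta\|x\|^2}$ is needed to define the value at all, and why one cannot simply invoke dominated convergence for $fe^{-\beta\|x\|^2}\to f$. Since $f$ is homogeneous of degree $0$ it is bounded, hence a tempered distribution, so all the Fourier transforms below make sense.

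I would fix $\psi\in C_c^\infty(\RR^{2n+1})$ with $\psi\equiv 1$ near $0$ and split $f=\psi f+(1-\psi)f$. For the singular part, $\psi f$ is bounded with compact support, hence $\psi f\in L^1(\RR^{2n+1})$; then $\psi f\,e^{-\beta\|x\|^2}\to\psi f$ in $L^1$ by dominated convergence, and since $\|\mcF^*(g)\|_\infty\le C\|g\|_{L^1}$ the transforms converge uniformly. In particular $\mcF^*(\psi f\,e^{-\beta\|x\|^2})(0,\ft)\to\mcF^*(\psi f)(0,\ft)$, with no need to assume $\ft\ne 0$. This handles the origin.

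For the remaining part $h:=(1-\psi)f$, which is smooth and bounded, vanishes near $0$, and agrees with $f$ (hence is homogeneous of degree $0$) outside a compact set, I would trade $t$-derivatives for decay. Because $t$ carries homogeneous weight $2$, $\partial_t^k h$ is homogeneous of degree $-2k$ at infinity, and a short computation in dilation polar coordinates (where $dx\,dt=\rho^{Q-1}d\rho\,d\omega$ with $Q=2n+2$) shows $\partial_t^k h\in L^1(\RR^{2n+1})$ as soon as $2k>Q$, i.e.\ $k\ge n+2$; near $0$ there is nothing to check because $h$ vanishes there. Fixing such a $k$, the rule $\mcF^*(\partial_t^k u)=(-i\ft)^k\mcF^*(u)$ applied to $u=h\,e^{-\beta\|x\|^2}$ and to $u=h$ gives, on the open set $\{\ft\ne 0\}$,
\[
\mcF^*\big(h\,e^{-\beta\|x\|^2}\big)(0,\ft)=\frac{1}{(-i\ft)^k}\,\mcF^*\big((\partial_t^k h)\,e^{-\beta\|x\|^2}\big)(0,\ft),\qquad \mcF^*(h)(0,\ft)=\frac{1}{(-i\ft)^k}\,\mcF^*(\partial_t^k h)(0,\ft),
\]
where the right-hand sides are honest continuous functions because $\partial_t^k h\in L^1$ and $(-i\ft)^{-k}$ is smooth and nonvanishing for $\ft\ne 0$. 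Since $(\partial_t^k h)\,e^{-\beta\|x\|^2}\to\partial_t^k h$ pointwise and is dominated by $|\partial_t^k h|\in L^1$ uniformly in $\beta\in(0,1]$, dominated convergence yields $\mcF^*((\partial_t^k h)e^{-\beta\|x\|^2})(0,\ft)\to\mcF^*(\partial_t^k h)(0,\ft)$, hence $\mcF^*(h\,e^{-\beta\|x\|^2})(0,\ft)\to\mcF^*(h)(0,\ft)$.

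Adding the two parts and using linearity of $\mcF^*$ would give the stated identity. The main obstacle is the one isolated in the first paragraph: the total absence of decay in $x$ at the evaluation point, which forces the integration-by-parts step in $t$. The key quantitative input is choosing $k\ge n+2$, so that $\partial_t^k h$ is \emph{globally} integrable once the origin has been excised by $\psi$; this is exactly why the singular piece must be split off first (for $f$ itself, $\partial_t^k f$ is homogeneous of degree $-2k<-Q$ and therefore non-integrable \emph{near} $0$, so the reduction could not be applied directly).
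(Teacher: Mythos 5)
Your proof is correct and follows essentially the same route as the paper: the same cutoff decomposition $f=\psi f+(1-\psi)f$, dominated convergence on the compactly supported piece, and integration by parts in $t$ (trading $k$ derivatives for the factor $(-i\ft)^{-k}$) to make the piece at infinity integrable before applying dominated convergence again. The only difference is that you make the threshold $k\ge n+2$ explicit where the paper says ``$k$ sufficiently large,'' which is a welcome but inessential refinement.
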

\begin{proof}
Let $\varphi \in C^\infty_c\left(\RR^{2n+1}\right)$ be a compactly supported smooth function such that $\varphi(x,t)=1$ in a neighborhood of $0$.
Then $\varphi f$ is integrable and so
\[
\lim_{\beta \downarrow 0}\mcF\left(\varphi fe^{-\beta \|x\|^2 }\right)\left(0, \ft\right)= 
\mcF\left(\varphi f \right)\left(0, \ft\right)
\]
We also have
\[
\mcF^*\left(\left(1-\varphi\right) fe^{-\beta \|x\|^2 }\right) = \frac{1}{\left(-i \ft\right)^k} \mcF^*\left(\partial_t^k\left(\left(1-\varphi\right) f\right)e^{-\beta \|x\|^2 }\right) 
\]
$\partial_t^k\left(\left(1-\varphi\right) f\right)$ is integrable  if  $k$ is sufficiently large, and hence
\begin{align*}
\lim_{\beta \downarrow 0}\mcF^*\left(\partial_t^k\left(\left(1-\varphi\right) f\right)e^{-\beta \|x\|^2 }\right) \left(0, \ft\right)
&=
 \mcF^*\left(\partial_t^k\left(\left(1-\varphi\right) f\right)\right) \left(0, \ft\right) \\
 &=\left(-i \ft\right)^k \mcF^*\left(\left(1-\varphi\right) f\right) \left(0, \ft\right)
\end{align*}
The statement follows.

\end{proof}

\section{Proof of the main result}\label{sec:proofs}

Throughout this section, $G=\RR^{2n+1}$ is the Heisenberg group, with notations as in Example \ref{ex:HGrp}.

The trace $\tau_z(k)$  is, by definition, evaluation of the principal part $k_0$ of $k$ at a central element $z\in G$, $z\ne 0$. 
The definition of the trace $\tau(k)$ is expressed in terms of the principal Heisenberg symbol $(\sigma_+,\sigma_-)$, which involves the Fourier transform of $k$.
In order to relate  $\tau$ and $\tau_z$, we use Lemma \ref{lem:beta_and_delta_to_zero} to express $\tau_z(k)$ in terms of  $(\sigma_+,\sigma_-)$. 

From there, the proof of  Theorem \ref{thm:main} consists of a series of calculations.
These calculations result in Proposition \ref{prop:t_z_formula}, which expresses $\tau_z$ as a linear combination of $\tau$ and a well-known residue trace (see (\ref{eqn:res}) below).
Theorem \ref{thm:main} follows immediately from Proposition \ref{prop:t_z_formula}.

For readability, we have formalized  steps in these calculations as lemmas.

\vskip 6pt

\begin{lemma}\label{lem:tau_z_as_beta_limit}
Let  $k\in \Psi^0_H(G)$, and let  $z=(0,s)\in G$  be a central element with $s\ne 0$.
Then
\[ \tau_z(k) = \frac{n!}{(2\pi)^{2n+1}} \;\lim_{\beta\downarrow 0}
\int_{\RR^{2n}}\left( \frac{ \sigma_+\left(x\right) }{ \left(\beta \|x\|^2-i \ft\right)^{n+1}} 
+ \frac{\sigma_-\left(x\right) }{ \left(\beta \|x\|^2+i \ft\right)^{n+1}}\right)dx\]
\end{lemma}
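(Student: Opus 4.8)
The plan is to realize $\tau_z(k)$ as an inverse Fourier transform evaluated at the central point $(0,s)$, regularize that (a priori divergent) integral using Lemma \ref{lem:beta_and_delta_to_zero}, and then evaluate the regularized integral explicitly by a change of variables adapted to the dilations.

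First I would set $f := \hat{k}_0$, the Fourier transform of the principal part $k_0$. By definition $\tau_z(k) = k_0(0,s)$, and since $f = \mcF(k_0)$ is homogeneous (hence tempered), Fourier inversion gives $k_0 = \mcF^*(f)$ and therefore $\tau_z(k) = \mcF^*(f)(0,s)$. The function $f$ is smooth on $\RR^{2n+1}\setminus\{0\}$ and homogeneous of degree $0$ for the dilations, with $\sigma_+(x) = f(x,+1)$ and $\sigma_-(x) = f(x,-1)$, so it meets the hypothesis of Lemma \ref{lem:beta_and_delta_to_zero}, which yields
\[ \tau_z(k) = \lim_{\beta\downarrow 0}\lim_{\delta\downarrow 0} \mcF^*\left(f(x,t)\,e^{-\beta\|x\|^2 - \delta|t|}\right)(0,s). \]

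Next I would compute the inner expression explicitly for fixed $\beta,\delta > 0$. Being homogeneous of degree $0$ and continuous on a compact cross-section of the dilation orbits, $f$ is bounded, so $f\,e^{-\beta\|x\|^2 - \delta|t|}\in L^1(\RR^{2n+1})$ and Fubini permits integrating in $x$ first. For $t>0$ the substitution $x=\sqrt{t}\,u$ together with homogeneity gives $f(x,t)=f(u,1)=\sigma_+(u)$, Jacobian $t^n$, and $\|x\|^2=t\|u\|^2$; for $t<0$ the substitution $x=\sqrt{|t|}\,u$ gives $f(x,t)=\sigma_-(u)$ with Jacobian $|t|^n$. The resulting $t$-integrals are the elementary Gamma integrals $\int_0^\infty t^n e^{-at}\,dt = n!/a^{n+1}$ ($\re a>0$), applied with $a=\delta+\beta\|u\|^2\mp is$. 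This produces
\[ \mcF^*\!\left(f\,e^{-\beta\|x\|^2 - \delta|t|}\right)(0,s) = \frac{n!}{(2\pi)^{2n+1}} \int_{\RR^{2n}} \left( \frac{\sigma_+(u)}{\left(\delta + \beta\|u\|^2 - is\right)^{n+1}} + \frac{\sigma_-(u)}{\left(\delta + \beta\|u\|^2 + is\right)^{n+1}} \right) du. \]

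Finally I would take the two limits. With $\beta>0$ fixed and $s\ne 0$, the bound $\left|\delta + \beta\|u\|^2 \mp is\right|\ge |s|>0$ supplies a $\delta$-independent integrable majorant (here I also use that $\sigma_\pm$ are bounded, so the integrand decays like $\|u\|^{-2n-2}$), and dominated convergence—equivalently Lemma \ref{lem:beta_to_zero}—lets me send $\delta\downarrow 0$ inside the integral, removing $\delta$ from the denominators. The remaining outer limit $\beta\downarrow 0$ is then exactly the asserted formula. The main obstacle is the bookkeeping of these interchanges of integration and limits; the two facts that make it routine are the boundedness of the degree-zero homogeneous function $f$ (for Fubini) and the uniform lower bound on the denominators coming from $s\ne 0$ (for the $\delta$-limit), after which the dilation change of variables and the Gamma integral are purely mechanical.
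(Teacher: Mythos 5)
Your proposal is correct and follows essentially the same route as the paper: identify $\tau_z(k)=k_0(0,s)=\mcF^*(\hat k_0)(0,s)$, invoke Lemma \ref{lem:beta_and_delta_to_zero} to regularize, then use the dilation change of variables and the Gamma integral to get the explicit $\delta,\beta$-regularized expression before passing to the limit. The only difference is cosmetic: you spell out the dominated-convergence step that removes $\delta$ from the denominators, which the paper leaves implicit in its appeal to Lemma \ref{lem:beta_and_delta_to_zero}.
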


\begin{proof}
Let $\delta >0$. Denote $f=\hat{k}_0$.
For $t>0$ we have $f\left(x, t\right) = \sigma_+\left(x/\sqrt{t}\right)$, for $t <0$: $f\left(x, t\right) =\sigma_-\left(x/\sqrt{-t}\right)$.
Change of variables gives 
\begin{multline*}
(2\pi)^{2n+1}\mcF^*\left(fe^{-\beta \|x\|^2 - \delta |t|}\right)\left(0, \ft\right) =
\int  fe^{-\beta \|x\|^2 - \delta |t| + i \ft t} dt dx=\\
 \int_{\RR^{2n}} \int_0^\infty \sigma_+\left(x/\sqrt{t}\right)e^{-\beta \|x\|^2 - \delta t + i \ft t} \,dtdx
+ \int_{\RR^{2n}}\int_{-\infty}^0 \sigma_-\left(x/\sqrt{-t}\right) e^{-\beta \|x\|^2 + \delta t + i \ft t}  \,dtdx=\\
\int_{\RR^{2n}} \sigma_+\left(x\right)\int_0^\infty t^n e^{-(\beta \|x\|^2 + \delta  - i \ft) t} \,dt dx
+ \int_{\RR^{2n}} \sigma_-\left(x\right) \int^{\infty}_0 t^n e^{-(\beta \|x\|^2 + \delta   + i \ft) t}  \,dt dx
\end{multline*}
Using the Gamma integral,
\[ \int_0^\infty t^n e^{-ct}\,dt = \frac{1}{c^{n+1}}\int_0^\infty u^ne^{-u}\,du = \frac{\Gamma(n+1)}{c^{n+1}} = \frac{n!}{c^{n+1}}\]
we obtain
\[\mcF^*\left(fe^{-\beta \|x\|^2 - \delta |t|}\right)\left(0, \ft\right)
=\frac{n!}{(2\pi)^{2n+1}} \int_{\RR^{2n}}\left(\frac{ \sigma_+\left(x\right)}{ \left(\beta \|x\|^2+\delta-i \ft\right)^{n+1}} + \frac{  \sigma_-\left(x\right)}{ \left(\beta \|x\|^2+\delta+i \ft\right)^{n+1}}\right)\,dx
\]
The statement now follows from Lemma \ref{lem:beta_and_delta_to_zero}.
 
\end{proof}

With $k\in \Psi^0_H(G)$ as above, and the asymptotic expansions of $\sigma_\pm$ as in (\ref{eq:asymptotic_expansion}), define
\[
\Tilde{\sigma}_+:= \begin{cases}
\sigma_+ - \sum_{l=0}^n w_{2l} &\text{ if } \|x\|\ge 1\\
\sigma_+ - \sum_{l=0}^{n-1} w_{2l} &\text{ if } \|x\|< 1
\end{cases}
\]
and
\[
\Tilde{\sigma}_-:= \begin{cases}
\sigma_- - \sum_{l=0}^n (-1)^l w_{2l}  &\text{ if } \|x\|\ge 1\\
\sigma_- - \sum_{l=0}^{n-1} (-1)^l w_{2l}  &\text{ if } \|x\|< 1
\end{cases}
\]
Note that $\Tilde{\sigma}_\pm$ are integrable functions on $\RR^{2n}\minus \{0\}$.

\begin{lemma}
\[
 \frac{n!}{(2\pi)^{2n+1}}\; \lim_{\beta \downarrow 0} 
 \int_{\RR^{2n}}\left( \frac{ \Tilde{\sigma}_+\left(x\right) }{ \left(\beta \|x\|^2-i \ft\right)^{n+1}} + \frac{\Tilde{\sigma}_-\left(x\right) }{ \left(\beta \|x\|^2+i \ft\right)^{n+1}}\right)dx=
 \frac{n!}{(2\pi)^{n+1}(-is)^{n+1}}\tau(k)
\]
\end{lemma}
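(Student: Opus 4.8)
The plan is to push the limit $\beta\downarrow 0$ inside the integral by dominated convergence, evaluate the two resulting integrals, and then verify that the combination of integrands collapses pointwise to the function $\mathcal{R}$ of Definition~\ref{def:tau}.

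First I would justify the interchange of limit and integral. Since $\ft\ne 0$, for every $\beta>0$ and every $x$ we have
\[ |\beta\|x\|^2 \mp i\ft|^2 = \beta^2\|x\|^4 + \ft^2 \ge \ft^2, \]
so the two integrands are bounded in absolute value by $|\Tilde{\sigma}_+(x)|/|\ft|^{n+1}$ and $|\Tilde{\sigma}_-(x)|/|\ft|^{n+1}$ respectively, \emph{uniformly} in $\beta$. As noted just before the statement, $\Tilde{\sigma}_\pm$ are integrable on $\RR^{2n}\minus\{0\}$ (subtracting the homogeneous terms $w_{2l}$ tames the origin, where the worst surviving term has degree $-2(n-1)$, and the decay at infinity, where the leading surviving term has degree $-2(n+1)$), so the dominating function is integrable. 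Dominated convergence then applies, and letting $\beta\downarrow 0$ replaces $(\beta\|x\|^2\mp i\ft)^{n+1}$ by $(\mp i\ft)^{n+1}$ pointwise, giving for the left-hand side
\[ \frac{n!}{(2\pi)^{2n+1}}\left( \frac{1}{(-i\ft)^{n+1}}\int_{\RR^{2n}}\Tilde{\sigma}_+\,dx + \frac{1}{(i\ft)^{n+1}}\int_{\RR^{2n}}\Tilde{\sigma}_-\,dx \right). \]

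Next I would merge the two terms. Using $(i\ft)^{n+1}=(-1)^{n+1}(-i\ft)^{n+1}$ and $(-1)^{-(n+1)}=-(-1)^n$, the bracket becomes
\[ \frac{1}{(-i\ft)^{n+1}}\int_{\RR^{2n}}\left(\Tilde{\sigma}_+ - (-1)^n\Tilde{\sigma}_-\right)dx. \]
The heart of the matter is then the pointwise identity $\Tilde{\sigma}_+-(-1)^n\Tilde{\sigma}_- = \mathcal{R}$ on $\RR^{2n}\minus\{0\}$. On each of the regions $\|x\|\ge 1$ and $\|x\|<1$ I would substitute the definitions and collect the coefficient of $w_{2l}$, which is exactly $1-(-1)^{n+l}=\epsilon_l$, yielding
\[ \Tilde{\sigma}_+-(-1)^n\Tilde{\sigma}_- = \sigma_+ - (-1)^n\sigma_- - \sum_{l}\epsilon_l w_{2l}, \]
with the sum running to $l=n$ when $\|x\|\ge 1$ and to $l=n-1$ when $\|x\|<1$. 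The decisive point is that $\epsilon_n = 1-(-1)^{2n}=0$, so the extra $l=n$ term in the outer region drops out and both regions produce the same expression $\sigma_+-(-1)^n\sigma_- - \sum_{l=0}^{n-1}\epsilon_l w_{2l}=\mathcal{R}$.

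Finally, by Definition~\ref{def:tau} one has $\int_{\RR^{2n}}\mathcal{R}\,dx = (2\pi)^n\tau(k)$, and substituting back gives
\[ \frac{n!}{(2\pi)^{2n+1}}\cdot\frac{(2\pi)^n\,\tau(k)}{(-i\ft)^{n+1}} = \frac{n!}{(2\pi)^{n+1}(-i\ft)^{n+1}}\tau(k), \]
as claimed. I expect no genuinely hard obstacle here: the only step demanding care is the uniform-in-$\beta$ domination that legitimizes the limit, and the rest is bookkeeping. The conceptual content is simply that the piecewise definition of $\Tilde{\sigma}_\pm$ is engineered precisely so that their combination equals $\mathcal{R}$, which is exactly what the vanishing of $\epsilon_n$ guarantees.
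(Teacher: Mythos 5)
Your proposal is correct and follows essentially the same route as the paper's proof: dominated convergence (which the paper leaves implicit, justified by the integrability of $\Tilde{\sigma}_\pm$) to pass $\beta\downarrow 0$ inside the integral, followed by the pointwise identity $\Tilde{\sigma}_+ - (-1)^n\Tilde{\sigma}_- = \mathcal{R}$, where the vanishing of $\epsilon_n$ reconciles the two regions. Your version merely spells out the uniform domination bound and the coefficient bookkeeping that the paper states without detail.
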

\begin{proof}
Since $\Tilde{\sigma}_\pm$ are integrable, the left-hand side is simply
\[\frac{n!}{(-is)^{n+1}(2\pi)^{2n+1}}\int_{\RR^{2n}} (\Tilde{\sigma}_+(x) +(-1)^{n+1}  \Tilde{\sigma}_-(x))\,dx\]
This is equal to the right-hand side, since 
\[\mathcal{R} = \Tilde{\sigma}_+ +(-1)^{n+1} \Tilde{\sigma}_-\]
where $\mathcal{R}$ is as in Definition \ref{def:tau}.

\end{proof}
We get
\begin{equation}\label{eq:tau_s_one}
\tau_z(k) =  \frac{n!}{(2\pi)^{n+1}(-is)^{n+1}} \tau(k) + \sum_{l=0}^n I_l    
\end{equation} 
where, for $l=0,1,\dots, n-1$,
\[ I_l = \frac{n!}{(2\pi)^{2n+1}} \;\lim_{\beta\downarrow 0}
\int_{\RR^{2n}}\left( \frac{ w_{2l}(x) }{ \left(\beta \|x\|^2-i \ft\right)^{n+1}} 
+ \frac{(-1)^lw_{2l}(x) }{ \left(\beta \|x\|^2+i \ft\right)^{n+1}}\right)dx\]
The formula for $I_n$ is the same (with $l=n$), except that the domain of integration is $\|x\|\ge 1$.

\begin{lemma}\label{lem:I_l}
For $l=0,1,\dots n-1$ we have $I_l=0$. 
\end{lemma}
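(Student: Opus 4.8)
The plan is to use the homogeneity of $w_{2l}$ to rescale the integration variable, reducing the claim to the vanishing of a single $\beta$-independent integral, and then to evaluate that integral in polar coordinates, where the decisive cancellation appears.

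Set
\[ F(\beta) := \int_{\RR^{2n}} w_{2l}(x)\left(\frac{1}{(\beta\|x\|^2 - is)^{n+1}} + \frac{(-1)^l}{(\beta\|x\|^2 + is)^{n+1}}\right)dx, \]
so that $I_l = \frac{n!}{(2\pi)^{2n+1}}\lim_{\beta\downarrow 0}F(\beta)$. The hypothesis $0\le l\le n-1$ makes $w_{2l}$ locally integrable at the origin (its degree $-2l$ exceeds $-2n$) and, since $s\ne 0$, the denominators stay bounded away from $0$ there, while at infinity the integrand decays like $\|x\|^{-2l-2n-2}$; hence $F(\beta)$ converges absolutely for every $\beta>0$. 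First I would substitute $x=\beta^{-1/2}y$ and use $w_{2l}(\beta^{-1/2}y)=\beta^{l}w_{2l}(y)$ together with $dx=\beta^{-n}dy$ to obtain the scaling identity $F(\beta)=\beta^{l-n}F(1)$. Because $l-n<0$, the individual terms blow up as $\beta\downarrow0$, and the limit can exist only if $F(1)=0$; conversely $F(1)=0$ forces $F(\beta)\equiv 0$. Thus the lemma reduces to proving $F(1)=0$.

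Next I would pass to polar coordinates $y=r\omega$. Homogeneity gives $w_{2l}(r\omega)=r^{-2l}w_{2l}(\omega)$, so the angular and radial integrations factor as $F(1)=A_l\,(R_++(-1)^lR_-)$ with $A_l=\int_{S^{2n-1}}w_{2l}(\omega)\,d\omega$ and
\[ R_\pm=\int_0^\infty \frac{r^{2n-2l-1}}{(r^2\mp is)^{n+1}}\,dr. \]
The substitution $u=r^2$ turns $R_\pm$ into a standard Beta integral whose value is $\tfrac12\,\frac{\Gamma(n-l)\Gamma(l+1)}{\Gamma(n+1)}\,(\mp is)^{-(l+1)}$; this is legitimate precisely because $0<n-l<n+1$ and $\mp is$ lies off the negative real axis.

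The heart of the matter is the phase cancellation. Since $-(l+1)$ is a negative integer there is no branch ambiguity, and writing $(-i)^{-(l+1)}=(-1)^{l+1}i^{-(l+1)}$ yields
\[ (-is)^{-(l+1)} + (-1)^l(is)^{-(l+1)} = s^{-(l+1)}i^{-(l+1)}\big((-1)^{l+1}+(-1)^l\big)=0. \]
Hence $R_++(-1)^lR_-=0$, so $F(1)=0$ and $I_l=0$. I expect the main obstacle to be exactly this bookkeeping: individually each integral diverges as $\beta\downarrow0$, and one must track the phases $(\mp i)^{-(l+1)}$ against the sign $(-1)^l$ separating $\sigma_+$ from $\sigma_-$ to see that their combination cancels. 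Finally I would note that $l<n$ is used essentially — both for integrability at the origin and for the Beta formula — which is why $I_n$ (where $w_{2n}$ fails to be integrable at $0$) must be handled separately and yields the residue term rather than vanishing.
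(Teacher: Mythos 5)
Your proof is correct, and it follows the same skeleton as the paper's: pass to spherical coordinates, factor off the angular integral $\int_{S^{2n-1}}a_{2l}(\theta)\,d\theta$, substitute $u=r^2$, and show that the remaining radial integral vanishes. The two arguments diverge only at the last step. The paper folds the two half-line integrals into a single integral $\tfrac12\int_{-\infty}^{\infty} z^{n-l-1}\left(\beta z-is\right)^{-(n+1)}dz$ via the substitution $z\mapsto -z$ in the second summand, and kills it by closing the contour in the half-plane that does not contain the pole $z=is/\beta$ (here $l\le n-1$ guarantees the numerator is a polynomial of degree at most $n-1$, so the semicircle contribution vanishes). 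You instead evaluate each half-line integral in closed form as a Beta integral, $\tfrac12\,\frac{\Gamma(n-l)\Gamma(l+1)}{\Gamma(n+1)}(\mp is)^{-(l+1)}$, and observe that the two phases cancel against the sign $(-1)^l$; since the exponent $-(l+1)$ is a negative integer there is indeed no branch ambiguity, and the analytic continuation in the parameter $\mp is$ off the negative real axis is the right justification. Your preliminary rescaling $x=\beta^{-1/2}y$, giving $F(\beta)=\beta^{l-n}F(1)$, is a clean (if optional) normalization that the paper does not need, since its contour argument works for every $\beta>0$ directly. Both proofs use the hypothesis $0\le l\le n-1$ in the same essential places --- yours for local integrability of $w_{2l}$ and convergence of the Beta integral, the paper's for integrability and the polynomial numerator --- and your closing remark correctly identifies why $I_n$ must be treated separately and produces the residue term instead of vanishing.
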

\begin{proof}
Introduce spherical coordinates 
\[ r:=\|x\| \ge 0\qquad \theta:=\frac{x}{r} \in S^{2n-1}\qquad x\in \RR^{2n}\]
and let
\[ w_{2l}(x) = a_{2l}(x)r^{-2l} \qquad a_{2l} \in C^\infty(S^{2n-1})\]
We have
\begin{multline*}
 \int_{\RR^{2n}}\left( \frac{ a_{2l}(\theta)r^{-2l} }{ \left(\beta r^2-i \ft\right)^{n+1}} +  \frac{(-1)^la_{2l}(\theta)r^{-2l}  }{ \left(\beta r^2+i \ft\right)^{n+1}}\right)dx =\\
\int_{S^{2n-1}} a_{2l}(\theta) d \theta \int_0^{\infty} \left( \frac{r^{-2l} }{ \left(\beta r^2-i \ft\right)^{n+1}} +  \frac{(-1)^l r^{-2l}  }{ \left(\beta r^2+i \ft\right)^{n+1}}\right)r^{2n-1} dr   
\end{multline*}
and
\begin{multline*}
\int_0^{\infty} \left( \frac{r^{-2l} }{ \left(\beta r^2-i \ft\right)^{n+1}} +  \frac{(-1)^l r^{-2l}  }{ \left(\beta r^2+i \ft\right)^{n+1}}\right)r^{2n-1} dr   =\\
\frac{1}{2}\int_0^{\infty} \left( \frac{z^{-l+n-1} }{ \left(\beta z-i \ft\right)^{n+1}} +  \frac{(-1)^l z^{-l+n-1}  }{ \left(\beta z+i \ft\right)^{n+1}}\right)dz =
\frac{1}{2}\int_{-\infty}^{\infty}  \frac{z^{-l+n-1} }{ \left(\beta z-i \ft\right)^{n+1}} dz
\end{multline*}
We used a change of variables $z\mapsto -z$ for the second summand.

Contour integration over an interval $[-R, R]$ and a semicircle of radius $R$ in the upper half-plane if $\ft <0$, or the lower half-plane if $\ft >0$, shows that 
\[
\int_{-\infty}^{\infty}  \frac{z^{-l+n-1} }{ \left(\beta z-i \ft\right)^{n+1}} dz=0
\]
\end{proof}

With  $a_{2l}\in C^\infty(S^{2n-1})$ as above, define
\begin{equation}\label{eqn:res}
    \Res(k) = -\frac{1}{2 (2\pi)^n}\int_{S^{2n-1}} a_{2n}(\theta) d \theta
\end{equation} 
This is a trace on $\Symb(G)$ (see \cite{GvE22}).

\begin{lemma}\label{lem:I_n}
\[ I_n = \sign(s) \frac{n!}{(2\pi)^{n+1}(-is)^{n+1}}\,\pi i\Res(k)\]
\end{lemma}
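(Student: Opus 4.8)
The plan is to follow the same reduction as in the proof of Lemma \ref{lem:I_l}, while paying attention to the fact that $l=n$ is the \emph{critical} exponent. Writing $w_{2n}(x)=a_{2n}(\theta)r^{-2n}$ in spherical coordinates and using $dx = r^{2n-1}\,dr\,d\theta$, the angular and radial integrals separate. The angular factor is $\int_{S^{2n-1}}a_{2n}(\theta)\,d\theta$, which by \eqref{eqn:res} equals $-2(2\pi)^n\Res(k)$. With the substitution $z=r^2$, and remembering that the domain of integration for $I_n$ is $\|x\|\ge 1$ (so $z\ge 1$), the radial part reduces to $\tfrac12 J(\beta)$, where
\[ J(\beta) = \int_1^\infty z^{-1}\left[\frac{1}{(\beta z - is)^{n+1}} + \frac{(-1)^n}{(\beta z + is)^{n+1}}\right]dz. \]

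First I would compute $\lim_{\beta\downarrow 0}J(\beta)$. Rescaling $u=\beta z$ turns $J(\beta)$ into $\int_\beta^\infty u^{-1}[\,\cdots\,]\,du$. The crucial point is that the bracketed expression vanishes at $u=0$, since $(-is)^{-(n+1)}+(-1)^n(is)^{-(n+1)}=0$; hence the integrand is bounded near the origin and integrable at infinity, and dominated convergence gives
\[ \lim_{\beta\downarrow 0}J(\beta) = \int_0^\infty u^{-1}\left[\frac{1}{(u - is)^{n+1}} + \frac{(-1)^n}{(u + is)^{n+1}}\right]du. \]

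Next I would recombine this into a contour integral, exactly as in Lemma \ref{lem:I_l} but now tracking the origin. Writing the integral as $\lim_{\epsilon\downarrow 0}\int_\epsilon^\infty$ and applying the change of variables $u\mapsto -u$ to the second summand (the sign bookkeeping is identical to the $I_l$ computation) identifies the limit with the principal value
\[ \lim_{\beta\downarrow 0}J(\beta) = \mathrm{p.v.}\int_{-\infty}^\infty \frac{du}{u(u-is)^{n+1}}. \]
The integrand now has a genuine simple pole at $u=0$ on the contour, together with a pole of order $n+1$ at $u=is$, and decays like $|u|^{-(n+2)}$ at infinity. I would evaluate the principal value by the residue theorem, closing in the upper half-plane when $s>0$ and in the lower half-plane when $s<0$; the indentation around $u=0$ contributes a half-residue $\pm\pi i\,\Res_{u=0}$ while $u=is$ contributes a full residue. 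Using $\Res_{u=0}=(-is)^{-(n+1)}$ and $\Res_{u=is}=(-1)^n(is)^{-(n+1)}$, both cases collapse into
\[ \lim_{\beta\downarrow 0}J(\beta) = \sign(s)\,\frac{\pi i\,(-1)^{n}}{i^{n+1}s^{n+1}}. \]
Assembling the angular factor $-2(2\pi)^n\Res(k)$, the $\tfrac12$ from $z=r^2$, and the prefactor $n!/(2\pi)^{2n+1}$, and simplifying via $(-is)^{n+1}=(-1)^{n+1}i^{n+1}s^{n+1}$, yields the claimed formula.

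The main obstacle is precisely the critical nature of the exponent $l=n$. Unlike the cases $l<n$ in Lemma \ref{lem:I_l}, where the recombined integrand is regular at the origin and the contour integral vanishes outright, here the recombination produces a simple pole on the real axis. The delicate points are to justify that $\lim_{\beta\downarrow 0}J(\beta)$ is a convergent ordinary integral (which hinges entirely on the cancellation of the bracket at $u=0$), to recognize the recombined expression as a \emph{principal value} rather than an ordinary integral, and to correctly account for the half-residue at the origin together with the sign coming from the choice of half-plane. Getting these contributions to conspire into the single factor $\sign(s)$ is where the care is needed.
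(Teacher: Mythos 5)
Your proposal is correct and follows essentially the same route as the paper: spherical coordinates, the substitution $z=r^2$, the rescaling by $\beta$ and the reflection $u\mapsto -u$ to produce a principal-value integral of $z^{-1}(z-is)^{-(n+1)}$ over $\RR$, evaluated by residues (the paper merely reverses the order of the rescaling and reflection steps, and leaves the residue computation as ``standard''). Your explicit treatment of the half-residue at the origin, the full residue at $u=is$, and the cancellation of the bracket at $u=0$ justifying the $\beta\downarrow 0$ limit all check out and reproduce the paper's values $\mp\pi i(-is)^{-(n+1)}$.
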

\begin{proof}

To calculate $I_n$, we use spherical coordinates, as above. We have
\begin{multline*}
 \int_{r \ge 1} \left( \frac{ a_{2n}(\theta)r^{-2n}  }{ \left(\beta r^2-i \ft\right)^{n+1}} +  \frac{(-1)^na_{2n}(\theta)r^{-2n}   }{ \left(\beta r^2+i \ft\right)^{n+1}}\right)r^{2n-1}dr d\theta =\\
\int_{S^{2n-1}} a_{2n}(\theta) d \theta \int_1^{\infty} \left( \frac{1}{ \left(\beta r^2-i \ft\right)^{n+1}} +  \frac{(-1)^l    }{ \left(\beta r^2+i \ft\right)^{n+1}}\right)r^{-1} dr   
\end{multline*}
Thus
\[ I_n = -\frac{2 (n!)}{(2\pi)^{n+1}}  \Res(k) \cdot \lim_{\beta\downarrow 0} \int_1^{\infty} \left( \frac{1}{ \left(\beta r^2-i \ft\right)^{n+1}} +  \frac{(-1)^l    }{ \left(\beta r^2+i \ft\right)^{n+1}}\right)r^{-1} dr  \]
As before
\begin{multline}
  \int_1^{\infty} \left( \frac{1}{ \left(\beta r^2-i \ft\right)^{n+1}} +  \frac{(-1)^l    }{ \left(\beta r^2+i \ft\right)^{n+1}}\right)r^{-1} dr   =\\
\frac{1}{2}\int_{z\in \RR, |z|\ge 1}  \frac{z^{-1} }{ \left(\beta z-i \ft\right)^{n+1}} dz =  \frac{1}{2}\int_{z\in \RR, |z|\ge \beta}  \frac{z^{-1} }{ \left(z-i \ft\right)^{n+1}}  dz. 
\end{multline}
A standard application of the residue theorem shows that 
\[
\lim_{\beta \downarrow 0}\int_{z\in \RR, |z|\ge \beta}  \frac{z^{-1} }{ \left(z+i \ft\right)^{n+1}}  dz= \begin{cases}
-\frac{ \pi i}{(-i\ft)^{n+1}} &\text{ if } \ft >0 \\
\frac{ \pi i}{(-i\ft)^{n+1}} &\text{ if } \ft <0
\end{cases}
\]
This proves the Lemma.

\end{proof}

\begin{proposition}\label{prop:t_z_formula}
Let $z =(0, s)\in G$ be a central element with $s\ne 0$. Then
 \begin{equation*}
 \frac{(2\pi)^{n+1}(-i\ft)^{n+1}}{n!} \,\tau_z= 
 \begin{cases}
 \tau +\pi i \Res &\text{ if } \ft >0 \\
 \tau -\pi i \Res &\text{ if } \ft <0
\end{cases} 
 \end{equation*}
\end{proposition}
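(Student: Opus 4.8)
The plan is to consolidate the identities already established; the proposition is the bookkeeping step that assembles equation \eqref{eq:tau_s_one} with the evaluations of the integrals $I_l$, and all of the analytic work has been discharged in the preceding lemmas. I would simply take \eqref{eq:tau_s_one},
\[ \tau_z(k) = \frac{n!}{(2\pi)^{n+1}(-is)^{n+1}}\,\tau(k) + \sum_{l=0}^n I_l, \]
as the starting point. Recall that this decomposition arises from splitting each $\sigma_\pm$ into its integrable remainder $\Tilde{\sigma}_\pm$, which produces the $\tau(k)$ term, together with the homogeneous pieces $w_{2l}$, which produce the $I_l$.

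Next I would apply Lemma \ref{lem:I_l} to discard every term with $l<n$, so that the sum collapses to the single surviving term $I_n$. Lemma \ref{lem:I_n} then evaluates it as
\[ I_n = \sign(s)\,\frac{n!}{(2\pi)^{n+1}(-is)^{n+1}}\,\pi i\,\Res(k). \]
Substituting this back and factoring out the common prefactor $n!\,/\,\bigl((2\pi)^{n+1}(-is)^{n+1}\bigr)$ yields
\[ \tau_z(k) = \frac{n!}{(2\pi)^{n+1}(-is)^{n+1}}\bigl(\tau(k) + \sign(s)\,\pi i\,\Res(k)\bigr). \]
To finish, I would multiply through by $(2\pi)^{n+1}(-is)^{n+1}/n!$ to clear the prefactor and then read off the two cases from $\sign(s)=+1$ when $s>0$ and $\sign(s)=-1$ when $s<0$, recovering $\tau \pm \pi i\,\Res$ exactly as stated.

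Since every limit in $\beta$ and $\delta$ and every contour integral has already been handled inside the lemmas, no genuine analytic obstacle remains at this stage. The only point requiring care is purely arithmetic: tracking the powers of $-is$, the factor $n!$, and the sign $\sign(s)$ through the factorization so that the prefactors appearing in Lemma \ref{lem:I_n} and in \eqref{eq:tau_s_one} cancel precisely against the multiplier. I would also flag that the hypothesis $s\ne 0$ is what makes the whole argument run, as both Lemma \ref{lem:I_l} and Lemma \ref{lem:I_n} rest on contour arguments that degenerate when $s=0$; but this is exactly the standing assumption, so the conclusion follows directly.
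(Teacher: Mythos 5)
Your proposal is correct and follows exactly the paper's own argument: the paper's proof of Proposition \ref{prop:t_z_formula} is the one-line instruction to combine equation \eqref{eq:tau_s_one} with Lemmas \ref{lem:I_l} and \ref{lem:I_n}, which is precisely the assembly you carry out. The arithmetic with the prefactor and the $\sign(s)$ case split checks out.
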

\begin{proof}
Combine (\ref{eq:tau_s_one}) with Lemmas \ref{lem:I_l} and \ref{lem:I_n}.
\end{proof}

\begin{bibdiv}
	\begin{biblist}
		
		\bib{BG88}{book}{
			author={Beals, Richard},
			author={Greiner, Peter},
			title={Calculus on {H}eisenberg manifolds},
			series={Annals of Mathematics Studies},
			publisher={Princeton University Press},
			address={Princeton, NJ},
			date={1988},
			volume={119},
		}
		
		\bib{CGGP92}{article}{
			author={Christ, Michael},
			author={Geller, Daryl},
			author={G{\l}owacki, Pawe{\l}},
			author={Polin, Larry},
			title={Pseudodifferential operators on groups with dilations},
			date={1992},
			ISSN={0012-7094},
			journal={Duke Math. J.},
			volume={68},
			number={1},
			pages={31\ndash 65},
			url={http://dx.doi.org/10.1215/S0012-7094-92-06802-5},
		}
		
		\bib{EMxx}{book}{
			author={Epstein, Charles},
			author={Melrose, Richard},
			title={The {H}eisenberg algebra, index theory and homology},
			note={Unpublished notes, available at \url{https://math.mit.edu/~rbm/book.html}.},
		}
		
		\bib{EM98}{article}{
			author={Epstein, Charles},
			author={Melrose, Richard},
			title={Contact degree and the index of {F}ourier integral operators},
			date={1998},
			ISSN={1073-2780},
			journal={Math. Res. Lett.},
			volume={5},
			number={3},
			pages={363\ndash 381},
			url={https://doi-org.dartmouth.idm.oclc.org/10.4310/MRL.1998.v5.n3.a9},
			review={\MR{1637844}},
		}
		
		\bib{GvE22}{article}{
			author={Gorokhovsky, Alexander},
			author={van Erp, Erik},
			title={The {H}eisenberg calculus, index theory and cyclic cohomology},
			date={2022},
			ISSN={0001-8708},
			journal={Adv. Math.},
			volume={399},
			pages={Paper No. 108229},
			url={https://doi-org.dartmouth.idm.oclc.org/10.1016/j.aim.2022.108229},
			review={\MR{4383012}},
		}
		
		\bib{Me83}{article}{
			author={Melin, Anders},
			title={Parametrix constructions for right invariant differential operators on nilpotent groups},
			date={1983},
			ISSN={0232-704X},
			journal={Ann. Global Anal. Geom.},
			volume={1},
			number={1},
			pages={79\ndash 130},
			url={https://doi.org/10.1007/BF02329740},
			review={\MR{739894}},
		}
		
	\end{biblist}
\end{bibdiv}
\end{document}